\newtheorem{theorem}{Theorem}[section]
\newtheorem{lemma}[theorem]{Lemma}
\newtheorem{proposition}[theorem]{Proposition}
\newtheorem{corollary}[theorem]{Corollary}
\theoremstyle{definition}
\newtheorem{definition}[theorem]{Definition}
\newtheorem{example}[theorem]{Example}
\newtheorem*{acknowledgements*}{Acknowledgements}
\theoremstyle{remark}
\newtheorem{remark}[theorem]{Remark}
\numberwithin{equation}{section}
\newcommand{\nc}{\newcommand}
\nc{\R}{{\bf R}}
\nc{\cl}{\mbox{\rm cl}\,} 
\nc{\cls}{ \mbox{{\scriptsize {\rm cl}}}\,} 
\nc{\conv}{\mbox{\rm conv}\,} 
\nc{\cone}{\mbox{\rm coco}\,} 
\nc{\lin}{\mbox{\rm lin}\,} 
\nc{\rb}{\mbox{\rm rb}\,}
\nc{\ri}{\mbox{\rm ri}\,}
\nc{\inter}{\mbox{\rm int}\,}
\nc{\bd}{\mbox{\rm bd}\,}
\nc{\relbd}{\mbox{\rm rlbd}\,}
\nc{\epi}{\mbox{\rm epi}\,}
\nc{\gph}{\mbox{\rm gph}\,}
\nc{\rge}{\mbox{\rm rge}\,}
\nc{\rgel}{\mbox{\rm {\scriptsize rge}}\,}
\nc{\sepi}{\mbox{\rm {\scriptsize epi}}\,}
\nc{\sdom}{\mbox{\rm {\scriptsize dom}}\,}
\nc{\sgph}{\mbox{\rm {\scriptsize gph}}\,}
\nc{\dom}{\mbox{\rm dom}\,}
\nc{\para}{\mbox{\rm par}\,}
\nc{\crit}{\mbox{\rm crit}\,}
\nc{\dist}{\mbox{\rm dist}\,}
\nc{\kernal}{\mbox{\rm ker}\,}
\nc{\supp}{\mbox{\rm supp}\,}
\nc{\lip}{\mbox{\rm lip}\,}
\nc{\aff}{\mbox{\rm aff}\,}
\nc{\sbd}{\mbox{\rm {\scriptsize  bd}}\,}
\nc{\id}{\mbox{\rm Id}}
\providecommand{\tr}{\mathop{\rm tr}\nolimits}
\providecommand{\rk}{\mathop{\rm rk}\nolimits}
\providecommand{\Diag}{\mathop{\rm Diag}\nolimits}
\begin{document}

\title{Approximating functions on stratified sets\footnote{The research of D. Drusvyatskiy was made with Government support under and awarded by DoD, Air Force Office of Scientific Research, National Defense Science and Engineering Graduate (NDSEG) Fellowship, 32 CFR 168a. M.~Larsson gratefully acknowledges funding from the European Research Council under the European Union's Seventh Framework Programme (FP/2007-2013) / ERC Grant Agreement n. 307465-POLYTE.}}
\author{D. Drusvyatskiy\thanks{Department of Mathematics, University of Washington, Seattle, WA 98195-4350 and Department of Combinatorics and Optimization, University of Waterloo, Waterloo, Ontario, Canada N2L 3G1, email: ddrusv@uw.edu} \quad\quad  M. Larsson\thanks{ETH Zurich, Department of Mathematics, R\"amistrasse 101, CH-8092, Zurich, Switzerland, email: martin.larsson@math.ethz.ch}}
\date{}



\maketitle

\begin{abstract}
We investigate smooth approximations of functions, with prescribed gradient behavior on a distinguished stratified subset of the domain. As an application, we outline how our results yield important consequences for a recently introduced class of stochastic processes, called the matrix-valued Bessel processes.
\\[2ex]
\noindent{\textbf {Keywords:}} Stratification, stratified vector field, approximation, normal bundle, Bessel process, Sobolev space
\end{abstract}

\section{Introduction} 
Nonsmoothness arises naturally in many problems of mathematical analysis. A conceptually simple way to alleviate the inherent difficulties involved is by smoothing. A classical result in this direction shows that any continuous function $f$ on $\R^n$ can be uniformly approximated by a $C^{\infty}$-smooth function $g$. See for example \cite[Theorem 10.16]{Lee:2003}. In light of this, it is natural to ask the following question. Can we, in addition, guarantee that such an approximating function $g$ satisfies ``useful'' properties on a distinguished nonsmooth subset $Q$ of $\R^n$?
In the current work, we consider sets $Q$ that  are {\em stratified} into finitely many smooth manifolds $\{M_i\}$. The ``useful'' property we would like to ensure is that the gradient of the approximating function $g$ at any point in $Q$ is tangent to the manifold containing that point, that is the inclusion 
\[\nabla g(x)\in T_x M_i \textrm{ holds for any index }  i \textrm{ and any point } x \in M_i.\]
This requirement can be thought of as a kind of Neumann boundary condition. In the language of stratification theory, we would like to approximate $f$ by a smooth function $g$ so that $\nabla g$ is a {\em stratified vector field}; see for example \cite{mather}. To the best of our knowledge, such a question has never been explicitly considered, and therefore we strive to make our development self-contained.

We provide an intuitive and transparent argument showing the existence of a $C^1$-smooth approximating function satisfying the tangency condition, provided that the partitioning manifolds yield a {\em Whitney (a)-regular $C^2$-stratification}.  In particular, our techniques are applicable for all semi-algebraic sets---those sets that can be written as a union of finitely many sets, each defined by finitely many polynomial inequalities. For more details on semi-algebraic geometry, see for example \cite{ARAG,Coste-semi}. Guaranteeing a higher order of smoothness for the approximating function, even when the partitioning manifolds are of class $C^{\infty}$, seems fairly difficult, with the curvature of the manifolds entering the picture. Nevertheless, we identify a simple and easily checkable, though stringent, condition on the stratification ---{\em normal flatness} (Definition~\ref{A:1})---that bypasses such technical difficulties and allows us to guarantee that whenever the partitioning manifolds are $C^{\infty}$-smooth, so is the approximating function. 

At first sight, the {\em normal flatness} condition is deeply tied to polyhedrality. However, we prove that this condition satisfies the so-called {\em Transfer Principle} investigated for example in \cite{prox,man,man2,spec_id,der,lag,send,high_order}.
Consequently this condition holds for a number of important subsets of matrix spaces, and our strongest results become applicable. This allows us to apply our techniques to the study of a class of stochastic processes called the \emph{matrix-valued Bessel processes}, introduced in~\cite{Larsson:2012}. We give an informal outline of how our results constitute a key component needed to obtain a good description of the law of the process, and how they enable powerful uniqueness results to become available. Indeed, this was the original motivation for the current work.
The main approximability results of the current paper are stated in terms of the uniform metric. In Section~\ref{app:sob}, we extend these results to weighted Sobolev norms, which is perhaps more natural given the boundary-value feel of the problem at hand.

The outline of the manuscript is as follows. In Section~\ref{S:M}, we record basic notation that we will use throughout, and state the main results of the paper. Section~\ref{S:it} contains the proofs of the main results. In Section~\ref{S:A1}, we discuss the {\em Transfer Principle} and how it relates to stratifications. In Section~\ref{app:sob} we prove that, under reasonable conditions, smooth functions satisfying the tangency condition are dense in appropriate weighted Sobolev spaces. Finally in Section~\ref{S:M+}, we outline an application of our results to matrix-valued Bessel processes.

\section{Basic notation and summary of main results} \label{S:M}
Throughout, the symbol $|\cdot|$ will denote the standard Euclidean norm on $\R^n$ and the absolute value of a real number, while $\|\cdot\|$ will denote the induced operator norm on the space of linear operators on $\R^n$. A function $f\colon Q\to\R$, defined on a set $Q\subset\R^n$, is called $\kappa$-{\em Lipschitz continuous} (for some real $\kappa \geq 0$) if the inequality
\[
|f(x)-f(y)|\leq\kappa|x-y| \quad\textrm{ holds for all } x,y\in Q.
\]
The infimum of $\kappa\geq 0$ satisfying the inequality above is the {\em Lipschitz modulus} of $f$, and we denote it by $\mbox{\rm lip}\,f$. Thus we have $$\lip f=\sup_{x,y\in Q}\frac{|f(x)-f(y)|}{|x-y|}.$$ Whenever $\lip f$ is finite, we will say that $f$ is Lipschitz continuous. For notational convenience, $1$-Lipschitz continuous functions will be called {\em non-expansive}. 
A function $f\colon Q\to\R$ is said to be {\em locally Lipschitz continuous} if around each point $x\in Q$, there exists a neighborhood $U$ so that the restriction of $f$ to $Q\cap U$ is Lipschitz continuous.

Given any set $Q\subset\R^n$ and a mapping $f\colon Q\to \widetilde{Q}$, where $\widetilde{Q}\subset\R^m$, we say that $f$ is $C^p$-{\em smooth} if for each point $\bar{x}\in Q$, there is a neighborhood $U$ of $\bar{x}$ and a $C^p$-smooth mapping $\widehat{f}\colon \R^n\to\R^m$ that agrees with $f$ on $Q\cap U$. Throughout the manuscript, it will always be understood that $p$ lies in $\{1,2,\ldots,\infty\}$.

The following definition is standard.

\begin{definition}[Smooth manifold]
Consider a set $M\subset\R^n$. We say that $M$ is a $C^p$ {\em manifold} (or ``embedded submanifold'') {\em of dimension $r$} if for each point $\bar{x}\in M$, there is an open neighborhood $U$ around ${\bar{x}}$ such that $M\cap U=F^{-1}(0)$ for some $C^p$-smooth map $F\colon U\to\R^{n-r}$, with the derivative ${\mathrm D}F(\bar{x})$ having full rank. 
\end{definition}

The tangent space of a manifold $M$ at a point $x\in M$ will be denoted by $T_xM$, while the normal space will be denoted by $N_xM$. We will consider $T_xM$ and $N_xM$ as embedded subspaces of $\R^n$.

In the current work, we will be interested in subsets of $\R^n$ that can be decomposed into finitely many smooth manifolds satisfying certain compatibility conditions. Standard references on stratification theory are \cite{gor_mac,pflaum}.

\begin{definition}[Stratifications]\label{defn:whit}
{\rm
A $C^p$-{\em stratification} ${\mathcal A}$ of a set $Q\subset\R^n$ is a partition of $Q$ into finitely many nonempty $C^p$ manifolds (not necessarily connected), called {\em strata}, satisfying the following compatibility condition. 
\begin{itemize}[leftmargin=0.5cm]
\item[] {\bf Frontier condition:} For any two strata $L$ and $M$, the implication
\[
L\cap \cl M \neq \emptyset\quad\Longrightarrow\quad L\subset (\cl M)\setminus M \quad \textrm{ holds}.
\]
\end{itemize}
A $C^p$-stratification ${\mathcal A}$ is said to be {\em Whitney (a)-regular}, provided that the following condition holds. 
\begin{itemize}[leftmargin=0.5cm]
\item[] {\bf Whitney condition (a):} For any sequence of points $x_k$ in a stratum $M$ converging to a point $\bar{x}$ in a stratum $L$, if the corresponding normal vectors $v_k \in N_{x_k}M$ converge to a vector $v$, then the inclusion $v \in N_{\bar{x}}L$ holds.
\end{itemize} }
\end{definition}

\begin{remark}
{\rm
We should emphasize that though we require stratifications to be comprised of finitely many manifolds (strata), each such manifold may have infinitely many connected components. This being said, it is worth noting that sometimes in the literature the term stratum (unlike our convention) refers to each connected component of the partitioning manifolds.}
\end{remark}

It is reassuring to know that many important sets fall in this class: every subanalytic set admits a Whitney (a)-regular $C^p$-stratification, for any finite $p$, as does any definable set in an arbitrary o-minimal structure. In particular, this is true for semi-algebraic sets. For a discussion, see for example \cite{DM}.

Given a stratification $\mathcal A$, the frontier condition induces a (strict) partial order~$\prec$ on~$\mathcal A$, defined by
\[
L\prec M \quad\Longleftrightarrow\quad L\subset (\cl M)\setminus M.
\]
A stratum $M\in\mathcal A$ is {\em minimal} if there is no stratum $L\in\mathcal A$ with $L\prec M$. The {\em depth} of $\mathcal A$ is the maximal integer $m$ such that there exist strata $M_1,\ldots,M_m$ with $M_1\prec M_2\prec\cdots\prec M_m$.

We are now ready to state the main results of this paper. Their proofs are given in Section~\ref{S:it}.

\begin{theorem}[Approximation on stratified sets]\label{thm:main}
 Consider a closed set $Q\subset\R^n$ along with a $C^2$-stratification $\mathcal A$ of~$Q$, and let $f:\R^n\to\R$ and $\varepsilon:\R^n\to(0,\infty)$ be continuous functions. Then there exists a function $g\colon\R^n\to\R$ satisfying the following properties.
\begin{itemize}[leftmargin=0.5cm]
\item[] {\bf Uniform closeness:} The inequality $|f(x)-g(x)|< \varepsilon(x)$ holds for all $x\in \R^n$.
\item[] {\bf Differentiability and Lipschitzness:} The function $g$ is locally Lipschitz continuous and differentiable. Moreover, if $f$ is Lipschitz continuous, then so is $g$ and the estimate $\lip g \le 11^{m+1} ~\lip f$ holds, where $m$ is the depth of~$\mathcal A$.
\item[] {\bf Tangency condition:} For any stratum $M\in\mathcal{A}$ and any point $x\in M$, the inclusion 
\[\nabla g(x)\in T_xM\quad \textrm{ holds}.\]
\item[] {\bf Support:} Given a neighborhood $V_1$ of the support of $f$, we may choose $g$ so that its support is contained in $V_1$.
\item[] {\bf Agreement on full-dimensional strata:} If $f$ is $C^1$-smooth, then given a neighborhood $V_2$ of the set $$\bigcup_{M\in\mathcal{A},~\dim M<n} M,$$ we may choose $g$ so that it coincides with $f$ outside of $V_2$.
\end{itemize}
If $\mathcal{A}$ is a Whitney (a)-regular $C^2$-stratification, then we may in addition to the above properties ensure that $g$ is $C^1$-smooth.
\end{theorem}

\begin{remark}
{\rm
Note that in the above theorem, $f$ and $g$ are defined on all of $\R^n$, as opposed to only on~$Q$. In particular, any Lipschitz property should be understood with this in mind. Furthermore, the bound $\lip g\leq 11^{m+1}\ \lip f$ will be important for us because the multiplier $11^{m+1}$ {\em only} depends on the depth of~$\mathcal A$.

It is also worth mentioning that the hypothesis on $\varepsilon$ can be weakened to lower semicontinuity. Indeed, it is easy to show that any strictly positive, lower semicontinuous function on $\R^n$ can be bounded below by a strictly positive continuous function.}
\end{remark}

\begin{remark}
Theorem~\ref{thm:main} immediately implies that any Whitney $(a)$-regular $C^2$-stratification $\mathcal{A}$ of a set $Q\subset\R^n$, where $\mathcal{A}$ contains at least one nonzero dimensional stratum, admits a vector field that is not identically zero, continuous, conservative, and stratified. This nicely complements the development in \cite[Chapter~1]{MH}.
\end{remark}

Observe that even under the Whitney condition (a), the approximating function $g$, guaranteed to exist by Theorem~\ref{thm:main},  is only $C^{1}$-smooth. To guarantee a higher order of smoothness (under minimal conditions), it seems that one needs to impose stronger requirements, both on the stratification and on the curvature of the strata. Below we identify a simple, though stringent, condition which bypasses such technical difficulties. We begin with some notation.

Given a set $Q\subset\R^n$ and a point $x\in \R^n$, the {\em distance} of $x$ to $Q$ is
\[
d(x,Q):=\inf_{y\in Q}|x-y|,
\]
and the {\em projection} of $x$ onto $Q$ is
\[
P_Q(x):=\{y\in Q:|x-y|=d(x,Q)\}.
\]
Note that $P_Q(x)$ may be empty, a singleton, or it may contain multiple points.  When $P_Q(x)$ is a singleton, we will abuse notation slightly and write $P_Q(x)$ for the point it contains. A crucial fact for us will be that any $C^{p+1}$ manifold $M$ (for $p=1,\ldots,\infty$) admits a neighborhood on which the projection mapping $P_M$ is single-valued and $C^p$-smooth; see for example \cite[Lemma~4]{alt_proj} or \cite{distsq}. Indeed, this is the reason why throughout the manuscript we will be concerned with stratifications by manifolds that are at least of class $C^2$. See Subsection~\ref{S:tn} for more details.

\begin{definition}[Normally flat stratification]\label{A:1}
A $C^{p+1}$-{\em stratification} of a subset $Q$ of $\R^n$ is said to be {\em normally flat} if 
for any two strata $L,M \in\mathcal A$ with $L\prec M$, there are neighborhoods $V$ of $L$ and $U$ of $M$ so that the equality
\[
P_L(x) = P_L \circ P_M(x) \quad \textrm{holds for all}\quad x\in V\cap U.
\]
\end{definition}

The normal flatness condition is of course quite strong. In particular, normally flat stratifications automatically satisfy Whitney's condition~(a); see Proposition~\ref{P:A1Wh}. Nonetheless, this condition does hold in a number of important situations. Section~\ref{S:A1} contains a detailed analysis, in particular showing that polyhedral sets and {\em spectral lifts} of {\em symmetric polyhedra}, admit normally flat $C^{\infty}$-stratifications (Proposition~\ref{P:polflat} and Theorem~\ref{P:A1sp}).

The next result is a strengthened version of Theorem~\ref{thm:main}, under the normal flatness condition.

\begin{theorem}[Approximation on normally flat stratifications] \label{T:smooth}
Consider a closed set $Q\subset\R^n$ along with a normally flat $C^{p+1}$-stratification $\mathcal A$ of~$Q$, and let $f:\R^n\to\R$ and $\varepsilon:\R^n\to(0,\infty)$ be continuous functions. Then there exists a function $g:\R^n\to\R$ satisfying the following properties.
\begin{itemize}[leftmargin=0.5cm]
\item[] {\bf Uniform closeness:} The inequality $|f(x)-g(x)|< \varepsilon(x)$ holds for all $x\in \R^n$.
\item[] {\bf Smoothness and Lipschitzness:} The function $g$ is $C^p$-smooth. Moreover, if $f$ is Lipschitz continuous, then so is $g$ and the estimate $\lip g \le 11^{m+1} ~\lip f$ holds, where $m$ is the depth of~$\mathcal A$.
\item[] {\bf Enhanced tangency condition:} Each stratum $M\in\mathcal{A}$ has a neighborhood $U$ so that equality
$$g(x)=g\circ P_M(x)\quad \textrm{ holds for all } x\in U.$$
\item[] {\bf Support:} Given a neighborhood $V_1$ of the support of $f$, we may choose $g$ so that its support is contained in $V_1$.
\item[] {\bf Agreement on full-dimensional strata:} If $f$ is $C^p$-smooth, then given a neighborhood $V_2$ of the set $$\bigcup_{M\in\mathcal{A},~\dim M<n} M,$$ we may choose $g$ so that it coincides with $f$ outside of $V_2$.
\end{itemize}
\end{theorem}

In particular, the enhanced tangency condition of Theorem~\ref{T:smooth} directly implies that for any stratum $M\in\mathcal{A}$, any point $x\in M$, and any normal direction $v\in N_xM$, the function $\R\ni t\mapsto g(x+tv)$ is constant in a neighborhood of the origin. Consequently, we have
$${\mathrm D}^{(k)}g(x)[v,\ldots,v]=0, \quad \textrm{ for any } k=1,\ldots,p,$$
where ${\mathrm D}^{(k)}g(x)[v,\ldots,v]$ is the $k$'th order directional derivative of $g$ at $x$ in direction~$v$. This is a significant strengthening of the tangency condition in Theorem~\ref{thm:main}.

\section{The iterative construction and proofs of main results} \label{S:it}

Our method for proving results such as Theorems~\ref{thm:main} and~\ref{T:smooth} relies on an inductive procedure, where the original function $f$ (possibly after some initial smoothing) is first modified on the minimal strata, then on the strata whose frontiers consist of minimal strata, and so on. At each step, the resulting function approximates $f$ and has the desired properties on all strata preceding (in the sense of the partial order~$\prec$) the stratum currently under consideration. This allows the induction to continue. When no more strata remain, the desired properties hold on the entire stratified set. In this section we give a detailed description of this iterative construction, in particular the induction step. The reason for bringing out some of the details, as opposed to hiding them inside proofs, is that the same general approach can sometimes be used in specific situations to obtain further properties of the approximating function. An example of this will be discussed in Section~\ref{S:M+}.

The induction step has two crucial ingredients, namely (1)~finding a suitable \emph{tubular neighborhood} of the stratum where the current function is to be modified, and (2)~an {\em interpolation function} constructed using this tubular neighborhood. These two objects are described in Subsections~\ref{S:tn} and~\ref{S:if}. Then, in Subsection~\ref{S:it1}, the induction step is described in detail, laying the groundwork for the proofs of the main results, given in Subsection~\ref{S:pf}.

\subsection{The tubular neighborhood} \label{S:tn}
In this subsection, we follow the notation of \cite[Section 10]{Lee:2003}. We should stress that this subsection does not really contain any new results. Its only purpose is to record a number of observations needed in the latter parts of the manuscript.

For a $C^{p+1}$ manifold $M\subset\R^n$, the {\em normal bundle} of $M$, denoted by $NM$, is the set
\[
NM:=\{(x,v)\in \R^n\times\R^n:x\in M, v\in N_xM\}.
\]
It is well-known that the normal bundle $NM$ is itself a $C^p$ manifold. 
Consider the mapping $E:NM\to \R^n$ defined by
\[
E(y,v) = y+v.
\]
A {\em tubular neighborhood} $U$ of $M$ is by definition a $C^p$ diffeomorphic image under $E$ of an open subset $V\subset NM$ having the form 
\begin{equation} \label{eq:tube}
V = \{(y,v) \in NM : | v| < \delta(y)\},
\end{equation}
for some continuous function $\delta\colon M\to (0,\infty)$.
It is standard that tubular neighborhoods of $M$ always exist; see for example \cite[Theorem~10.19]{Lee:2003}. Furthermore, there always exists a tubular neighborhood on which the projection mapping $P_M$ is single-valued and $C^p$-smooth; see \cite[Lemma~4]{alt_proj}. Then for any point $\bar{x}$ lying in $M$, the derivative of the projection $\mathrm D P_M(\bar{x})$ is simply the orthogonal projection onto the tangent space $T_{\bar{x}}M$. It is worth noting that whenever the manifold $M$ is only $C^1$ smooth, the projector $P_M$ may easily fail to be single-valued on any neighborhood of $M$; this is the main reason why throughout the manuscript we consider $C^{p+1}$ manifolds for $p=1,\ldots,\infty$.

The following observation shows that we may squeeze the tubular neighborhood inside any given open set containing the manifold.

\begin{lemma}[Existence of a tubular neighborhood]\label{L:exis_tube}
Consider a $C^{p+1}$ manifold $M\subset\R^n$, as well as an arbitrary neighborhood $V$ of $M$. Then there exists a tubular neighborhood
\[
U_\delta=\left\{y+v\in\R^n: y\in M,\ v\in N_yM,\ |v|<\delta(y)\right\}
\]
satisfying $U_\delta\subset V$. Furthermore for any real $\kappa >0$, we may ensure that $\delta$ is $\kappa$-Lipschitz continuous.
\end{lemma}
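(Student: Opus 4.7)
The plan is to start from a pre-existing tubular neighborhood $U_{\delta_0}$ (guaranteed by the classical theory just cited), shrink its radius function pointwise so that the resulting tube is forced inside $V$, and then regularize that radius by infimal convolution to obtain the Lipschitz property. First I would invoke \cite[Theorem~10.19]{Lee:2003} to produce a strictly positive continuous function $\delta_0\colon M\to(0,\infty)$ such that $E$ is a ${\bf C}^p$-diffeomorphism on $\{(y,v)\in NM:|v|<\delta_0(y)\}$. Then, with the convention $d(y,\emptyset)=+\infty$ in case $V=\R^n$, I would set
\[ r(y):=\min\bigl\{\delta_0(y),\ d(y,\R^n\setminus V)/2\bigr\}.\]
Openness of $V$ together with the inclusion $M\subset V$ makes $r$ continuous and strictly positive on $M$, and the bound $|v|<r(y)$ forces $y+v\in V$.

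The substantive step is to produce a $\kappa$-Lipschitz minorant of $r$ that remains strictly positive. The natural candidate is the infimal convolution
\[\delta(y):=\inf_{z\in M}\bigl\{r(z)+\kappa|y-z|\bigr\},\]
which satisfies $\delta\le r$ (take $z=y$) and is $\kappa$-Lipschitz by a standard two-line computation. The only point that genuinely requires care --- and this is the main obstacle in the proof --- is that $\delta$ stays strictly positive; a priori the infimum could conceivably be driven to zero by a sequence of points $z$ along which $r$ decays. To rule this out at a fixed $y_0\in M$, I would use continuity of $r$ to choose $\rho>0$ with $r\ge r(y_0)/2$ on $M\cap B(y_0,\rho)$, and then split the infimum: for $z$ in this ball the inequality $r(z)+\kappa|y_0-z|\ge r(y_0)/2$ is immediate, while for the remaining $z$ the trivial bound $r(z)+\kappa|y_0-z|\ge\kappa\rho$ holds. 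Hence $\delta(y_0)\ge\min\{r(y_0)/2,\kappa\rho\}>0$.

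The rest is routine packaging. Since $\delta\le\delta_0$, the open set $\{(y,v)\in NM:|v|<\delta(y)\}$ lies inside the region on which $E$ is already a diffeomorphism, so its image $U_\delta$ is genuinely a tubular neighborhood of $M$; and since $\delta\le r$, the inclusion $U_\delta\subset V$ is immediate from the choice of $r$.
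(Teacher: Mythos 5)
Your argument is correct, and it takes a genuinely different route from the paper's. The paper first replaces $V$ by its intersection with an arbitrary tubular neighborhood of $M$, and then defines $\delta$ directly and intrinsically as
\[
\delta(y)=\sup\bigl\{0\le\epsilon\le 1 : V_\epsilon(y)\subset V\bigr\},
\qquad
V_\epsilon(y)=\{z+v : z\in M,\ v\in N_zM,\ |y-z|<\epsilon,\ |v|<\epsilon\};
\]
a short triangle-inequality argument (via the inclusion $V_{\delta(x)-|x-y|}(y)\subset V_{\delta(x)}(x)$) shows this $\delta$ is automatically nonexpansive, and the $\kappa$-Lipschitz claim then follows by rescaling. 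You instead decouple the two requirements: first force the tube inside $V$ by taking the pointwise minimum of a classical radius function $\delta_0$ with $d(\cdot,\R^n\setminus V)/2$, and only then regularize by the infimal convolution $\delta=\inf_{z\in M}\{r(z)+\kappa|y-z|\}$ to obtain the desired modulus. The trade-off is visible in where the work lands: the paper's sup-definition makes the Lipschitz estimate nearly immediate but relegates strict positivity to a ``Clearly'' (which quietly relies on embedded submanifolds being locally closed, so no distant sheet of $M$ can creep arbitrarily near $y$), whereas your inf-convolution is a modular, off-the-shelf Lipschitz regularization tool that cleanly separates the geometric step (containment in $V$) from the analytic one, at the cost of the explicit positivity check --- which you correctly identify as the one genuinely delicate point, and handle by the local splitting of the infimum. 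Both proofs are sound; yours is perhaps slightly more transparent about what each ingredient is doing.
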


\begin{proof}
By intersecting $V$ with some tubular neighborhood of $M$, we may assume that $V$ is the diffeomorphic image of some neighborhood of the zero-section $M\times \{0\}$ in $NM$. Now, for any point $y\in M$, define
\[
\delta(y) := \sup\{ 0\le\epsilon\le 1 : V_\epsilon(y) \subset V \},
\]
where
\[
V_\epsilon(y) := \{z+v \in \R^n : z\in M,\ v\in N_zM,\ |y-z|<\epsilon,\ |v|<\epsilon\}.
\]
Clearly $\delta$ is strictly positive and the inclusion $U_{\delta}\subset V$ holds.  We now show that $\delta$ is non-expansive. To see this, first note that whenever the inequality $|x-y|\ge \delta(x)$ holds, we have $\delta(x)-\delta(y)\le |x-y|$ trivially. On the other hand, if we have $|x-y|\le\delta(x)$, an application of the triangle inequality shows that the inclusion $V_\epsilon(y)\subset V_{\delta(x)}(x)$ is valid for $\epsilon = \delta(x)-|x-y|$. We deduce $\delta(y)\ge \epsilon$, and hence the inequality $\delta(x)-\delta(y)\le |x-y|$ is also valid in this case. Interchanging the roles of $x$ and $y$ gives the non-expansive property. Finally, replacing $\delta$ with $\kappa\delta$, if need be, ensures that $\delta$ is $\kappa$-Lipschitz continuous.
\end{proof}

In fact in Lemma~\ref{L:exis_tube}, we may ensure that $\delta$ is $C^{\infty}$-smooth. To see this, we need the following result, which has classical roots.

\begin{lemma}[Approximation of Lipschitz functions]\label{L:approx_lip}
Consider any $C^{p+1}$ manifold $M\subset\R^n$ and a function $f\colon M\to\R$ that is Lipschitz continuous. Then for any continuous function $\varepsilon\colon M\to (0,\infty)$ and a real $r >0$, there exists a Lipschitz continuous, $C^\infty$-smooth function $\widehat{f}\colon M\to\R$ satisfying 
\[|f(x)-\widehat{f}(x)|<\varepsilon(x) \textrm{ for all } x\in M,\]
with $\lip \widehat{f}\leq\lip f +r$.
\end{lemma}

\begin{proof}
The proof proceeds by extending the functions $f$ and $\varepsilon$ to an open neighborhood of $M$, using standard approximation techniques on this neighborhood, and then restricting the approximating function back to $M$. To this end, for any real $\epsilon >0$ there exists an open neighborhood $U$ of $M$ so that the projection $P_M$ is $(1+\epsilon)$-Lipschitz continuous on $U$. Consider now the functions $\widetilde{f}\colon U\to\R$ and $\widetilde{\varepsilon}\colon U\to (0,\infty)$, defined by $\widetilde{f}(x)=f(P_M(x))$ and $\widetilde{\varepsilon}(x)=\varepsilon(P_M(x))$. Observe that $\widetilde{f}$ agrees with $f$ on $M$ and $\widetilde{\varepsilon}$ agrees with $\varepsilon$ on $M$, and furthermore the inequality $\lip \widetilde{f}\leq (1+\epsilon)\lip f$ holds. 
It is standard then that for any $\widetilde{r}$ (see for example \cite[Theorem 1]{smoothing}) there exists a $C^{\infty}$ function $\widehat{f}$ on $U$ satisfying 
\[|\widetilde{f}(x)-\widehat{f}(x)|<\widetilde{\varepsilon}(x) \textrm{ for all } x\in U,\]
with $\lip \widehat{f}\leq\lip \widetilde{f} +\widetilde{r}$.
In particular, we deduce $\lip \widehat{f}\leq \lip f +\epsilon\lip f+\widetilde{r}$. Since $\epsilon$ and $\widetilde{r}$ can be chosen to be arbitrarily small, restricting $\widehat{f}$ to $M$ yields the result.
\end{proof}

Coming back to Lemma~\ref{L:exis_tube}, an application of Lemma~\ref{L:approx_lip} with $f=\frac{1}{2}\delta$ and $\varepsilon=\frac{1}{4}\delta$ shows that there is no loss of generality in assuming that $\delta$ is $C^\infty$-smooth and non-expansive. 
For ease of reference, we now record a version of Lemma~\ref{L:exis_tube}, where we impose a number of open conditions on the tubular neighborhood, which will be important for our latter development.

\begin{corollary} \label{L:tubnhd}
Consider a $C^{p+1}$ manifold $M\subset\R^n$, continuous functions $\varepsilon:\R^n\to(0,\infty)$ and $f\colon\R^n\to\R$, a neighborhood $V$ of $M$, and the closed set $\Gamma:=(\cl M)\setminus M$. Then there exists a tubular neighborhood
\[
U_\delta=\left\{y+v\in\R^n: y\in M,\ v\in N_yM,\ |v|<\delta(y)\right\},
\]
with $U_{\delta}\subset V\cap \Gamma^{c}$, and
satisfying
\newcounter{saveenum}
\begin{enumerate}
\item The width function $\delta\colon M\to (0,\infty)$ is $C^{\infty}$-smooth and non-expansive,
\item The metric projection $P_M$ is $C^p$-smooth on $U_{\delta}$,
\setcounter{saveenum}{\value{enumi}}
\end{enumerate}
and for each $x\in U_\delta$ we have 
\begin{enumerate}
\setcounter{enumi}{\value{saveenum}}
\item $|f(x)-f\circ P_M(x)| < \varepsilon(x)$,
\item $|x-P_M(x)| < d(x, \Gamma)^2$,
\item $\| \mathrm D P_M(x) - \mathrm DP_M(y)\| < d(y, \Gamma)$, where $y=P_M(x)$,
\item $\|\mathrm D P_M(x)\| < 2$.
\end{enumerate}
\end{corollary}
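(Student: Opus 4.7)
The strategy is to start from Lemma \ref{L:exis_tube} and successively shrink the width function $\delta$ until each of conditions (3)--(6) is enforced, then smooth at the very end with Proposition \ref{prop:approx_lip}. Since $\Gamma \cap M = \emptyset$, the set $V \cap \Gamma^c$ is an open neighborhood of $M$, so Lemma \ref{L:exis_tube} supplies a non-expansive width function $\delta_0 \colon M \to (0,\infty)$ with $U_{\delta_0} \subset V \cap \Gamma^c$. Intersecting $U_{\delta_0}$ with a tubular neighborhood on which $P_M$ is ${\bf C}^p$-smooth (furnished by the standard fact recorded in Subsection~\ref{S:tn}), and using that the minimum of non-expansive functions is non-expansive, I can arrange in addition that (1) and (2) hold on $U_{\delta_0}$.

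Next I verify that each of (3)--(6) holds in an open neighborhood of every point of $M$. At any $y_0 \in M$ we have $P_M(y_0) = y_0$, so the left-hand sides of (3)--(5) vanish while their right-hand sides are strictly positive, using $\varepsilon(y_0) > 0$ and $d(y_0,\Gamma)>0$ (as $\Gamma$ is closed and disjoint from $M$). For (6), since ${\mathrm D}P_M(y_0)$ is the orthogonal projection onto $T_{y_0} M$, we have $\|{\mathrm D}P_M(y_0)\| \leq 1 < 2$. Continuity of $f$, $\varepsilon$, $d(\cdot,\Gamma)$, $P_M$, and ${\mathrm D}P_M$ on $U_{\delta_0}$ then shows that the set $G \subset U_{\delta_0}$ of points where (3)--(6) simultaneously hold is open and contains all of $M$.

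I would then define on $M$ the pointwise safe radius
\[
\phi(y) := \sup\bigl\{ r \in (0, \delta_0(y)] : y + v \in G \textrm{ for every } v \in N_y M \textrm{ with } |v| \leq r \bigr\}.
\]
Compactness of each closed fiber combined with openness of $G$ gives $\phi(y) > 0$, and continuous dependence of the fibers on their base point (inside the manifold $NM$) together with openness of $G$ shows that $\phi$ is lower semicontinuous. To convert $\phi$ into a non-expansive minorant I use the standard inf-convolution $\delta_1(y) := \inf_{z \in M}[\phi(z) + |y - z|]$, which is $1$-Lipschitz, bounded above by $\phi$ (take $z = y$), and strictly positive: any sequence $z_n$ driving the infimum to $0$ would have to converge to $y$, but then lower semicontinuity would force $\liminf_n \phi(z_n) \geq \phi(y) > 0$, a contradiction.

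Finally, applying Proposition \ref{prop:approx_lip} to $\tfrac{1}{2}\delta_1$ with tolerance $\tfrac{1}{4}\delta_1$ and Lipschitz slack strictly smaller than $\tfrac{1}{2}$ produces a ${\bf C}^\infty$, non-expansive, strictly positive $\delta \colon M \to (0, \infty)$ satisfying $\delta < \delta_1 \leq \phi$, and the tubular neighborhood $U_\delta$ then sits inside $G$ and meets all six requirements. The one step that requires genuine care is the lower semicontinuity of $\phi$, which I would establish by showing that for any $\rho < \phi(y)$ and any sequence $y_n \to y$ in $M$, the closed fibers of radius $\rho$ at $y_n$ eventually lie in $G$, by Hausdorff convergence of the fibers together with openness of $G$.
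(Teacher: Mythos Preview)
Your argument is correct and follows precisely the route the paper intends. The paper does not give a detailed proof of this corollary; it simply observes (in the paragraph preceding the statement) that conditions (3)--(6) are open conditions holding along $M$, so one may shrink the tubular neighborhood from Lemma~\ref{L:exis_tube} to enforce them, and then smooth $\delta$ via Proposition~\ref{prop:approx_lip} exactly as you do. Your write-up is thus a faithful and careful expansion of the paper's one-line justification, including the inf-convolution step to recover nonexpansiveness after passing to the pointwise safe radius.
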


\subsection{The interpolation function} \label{S:if}

Given a tubular neighborhood $U_\delta$ as in Corollary~\ref{L:tubnhd} (constructed based on a $C^{p+1}$ manifold $M$, a neighborhood $V$, functions $\varepsilon$ and $f$, and the closed set $\Gamma=(\cl M)\setminus M$), define a function $\phi:\R^n\to \R$ by setting
\[
\phi(x) =
\left\{
\begin{array}{ll}
0  & x \notin U_\delta \\[3mm]
\psi\left( \dfrac{|x - P_M(x)|} {\delta\circ P_M(x)} \right)  &  x\in U_\delta,  
\end{array}
\right.
\]
where $\psi:[0,\infty)\to[0,1]$ is a $C^{\infty}$-smooth cut-off function that is $1$ on $[0,1/4]$, vanishes on $[3/4,\infty)$, and whose first derivative is bounded by $7/3$ in absolute value. See Figure~\ref{F:1} for an illustration. Consider also the ``annulus'' around~$M$, defined by
\[
U_0 = \left\{x\in U_\delta: \frac{1}{4} \delta\circ P_M(x)<|x-P_M(x)|< \frac{3}{4} \delta\circ P_M(x) \right\}.
\]
We refer to the function $\phi$ as the \emph{interpolation function} associated with $U_\delta$, and its decisive properties are collected in the following lemma.
\smallskip

\begin{figure}[htbp]
\begin{center}
\includegraphics[width=90mm]{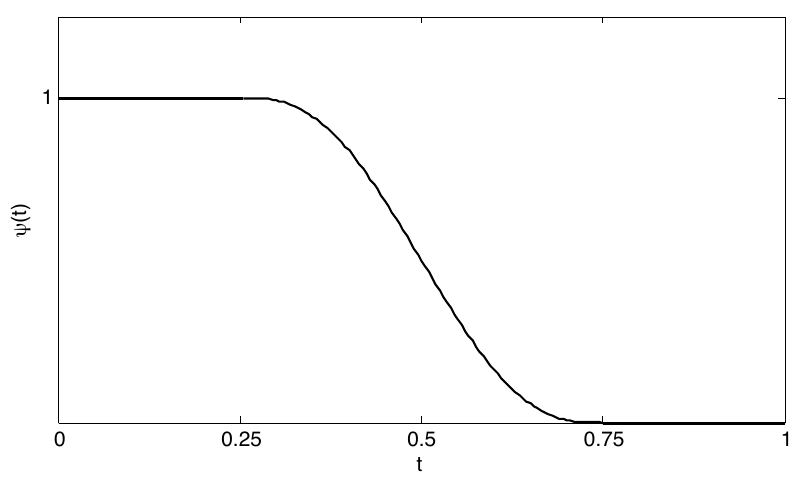}
\caption{The function $\psi$.}\label{F:1}
\label{default}
\end{center}
\end{figure}

\begin{lemma}[Properties of the interpolation function] \label{L:phi1}
The interpolation function $\phi$ satisfies the following properties.
\begin{enumerate}
\item $\phi=1$ on $\{x\in U_\delta: |x-P_M(x)| \le \frac{1}{4} \delta\circ P_M(x) \}$,
\item $\phi=0$ on $\{x\in U_\delta: |x-P_M(x)| \ge \frac{3}{4} \delta\circ P_M(x) \}$ and on $U_\delta^c$,
\item $\phi$ is $C^p$ smooth on $\Gamma^c$, and for all $x\in U_0$, we have
\[
\nabla \phi(x) = t\psi'( t ) \left[ \frac{x-P_M(x)}{|x-P_M(x)|^2} - \frac{\nabla(\delta\circ P_M)(x)}{\delta\circ P_M(x)} \right],
\]
where $t= \frac{|x - P_M(x)|}{\delta\circ P_M(x)}$.
\item For all $x\in U_\delta$ the inequality $|x-P_M(x)|\  |\nabla\phi(x)| \le 8$, holds.
\end{enumerate}
\end{lemma}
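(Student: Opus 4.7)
My plan is to dispatch (i) and (ii) by unwinding the definitions, and to settle (iii) and (iv) via a chain-rule computation that leans on the structural bounds for $P_M$ and $\delta$ collected in Corollary~\ref{L:tubnhd}. For (i)--(ii) this is really bookkeeping: if $|x-P_M(x)|\le \tfrac14 \delta(P_M(x))$, the argument of $\psi$ lies in $[0,1/4]$ where $\psi\equiv 1$; if $|x-P_M(x)|\ge \tfrac34 \delta(P_M(x))$ or $x\notin U_\delta$, then $\phi(x)=0$ either by the hypothesis on $\psi$ or by the definition of $\phi$.

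For (iii), set $T(x):=|x-P_M(x)|/\delta(P_M(x))$, so $\phi=\psi\circ T$ on $U_\delta$, and argue pointwise on $\Gamma^c$. On the open sub-regions $\{T<1/4\}$ and $\{T>3/4\}$ of $U_\delta$, $\phi$ is locally constant; on the intermediate set $|x-P_M(x)|>0$, so $T$ is a composition of ${\bf C}^p$-maps by items (1) and (2) of Corollary~\ref{L:tubnhd}, and smoothness is inherited through $\psi$. For $x_0\in\Gamma^c\setminus U_\delta$ one must show $\phi$ vanishes on a neighborhood: given $x_k\in U_\delta$ with $x_k\to x_0$, set $y_k=P_M(x_k)$ and $v_k=x_k-y_k$; item (4) forces $|v_k|<d(x_k,\Gamma)^2$, so $v_k$ is bounded and $y_k$ has a subsequential limit $y^*\in\cl M$. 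The case $y^*\in M$ would place $x_0$ in $U_\delta$, a contradiction; the case $y^*\in\Gamma$ is ruled out provided the construction has been arranged so that $\delta$ decays to zero along $\Gamma$ (a harmless refinement of Lemma~\ref{L:exis_tube}), since then $\delta(y_k)\to 0$ while $|v_k|\to|x_0-y^*|>0$, so $T(x_k)\to\infty$ and eventually $\phi(x_k)=0$. The explicit formula on $U_0$ now drops out of the chain rule $\nabla\phi=\psi'(T)\nabla T$, combined with the standard identity $\nabla\tfrac12|x-P_M(x)|^2 = x-P_M(x)$ and the quotient rule.

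For (iv), I would estimate the two summands in the formula for $\nabla\phi$ separately. After multiplying by $|x-P_M(x)|$, the triangle inequality yields
\[
|x-P_M(x)|\,|\nabla\phi(x)| \;\le\; t|\psi'(t)|\bigl(1 + t\,|\nabla(\delta\circ P_M)(x)|\bigr).
\]
Since $\delta$ is non-expansive on $M$ and $\|\mathrm D P_M(x)\|<2$ by item (6) of Corollary~\ref{L:tubnhd}, the chain rule gives $|\nabla(\delta\circ P_M)(x)|\le 2$. On $U_0$ one has $t<3/4$ and $|\psi'|\le 7/3$, so the right-hand side is at most $\tfrac34\cdot\tfrac73\cdot(1+\tfrac32) = \tfrac{35}{8} < 8$; outside $U_0$ one has $\psi'(T)=0$, so $\nabla\phi=0$ and the bound is trivial. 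The only genuine obstacle I anticipate is the smoothness claim across $\partial U_\delta\cap\Gamma^c$ in (iii), which requires exploiting both the quadratic bound of item (4) and the decay of $\delta$ near $\Gamma$ to prevent the support of $\phi$ from accumulating at such boundary points; once that subtlety is in place, the remaining arguments are mechanical chain-rule computations.
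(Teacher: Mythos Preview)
Your approach is correct and matches the paper's, which likewise disposes of (i)--(ii) by definition, derives the gradient formula in (iii) via the orthogonality $\langle \mathrm DP_M(x)[u], x-P_M(x)\rangle = 0$ (equivalent to your identity $\nabla\tfrac12|x-P_M(x)|^2 = x-P_M(x)$), and bounds (iv) using $|t\psi'(t)|\le 7/4$ together with $|\nabla(\delta\circ P_M)|\le 2$ to obtain the same $35/8$ estimate. Two small remarks on your self-flagged obstacle: the decay of $\delta$ to zero along $\Gamma$ is not an additional refinement but already a consequence of item~(4) of Corollary~\ref{L:tubnhd} (if $y_k\to y^*\in\Gamma$ with $\delta(y_k)\ge c>0$, choose $v_k\in N_{y_k}M$ with $|v_k|=\epsilon<\min(c,1)$ and pass to the limit in $\epsilon < (|y_k-y^*|+\epsilon)^2$ to get the contradiction $\epsilon\le\epsilon^2$); and in your case $y^*\in M$ you should also allow $|v^*|=\delta(y^*)$, which lands $x_0$ on $\partial U_\delta$ rather than inside, but then $T(x_k)\to 1>3/4$ and $\phi(x_k)=0$ anyway.
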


\begin{proof}
The first three properties follow directly from the definition, together with a calculation to compute the gradient. The only point that is somewhat delicate here is to verify the equality
\begin{equation}\label{eq:proj_form}
\nabla |\id - P_M|(x) = \frac{x-P_M(x)}{|x-P_M(x)|},
\end{equation}
for $x\in U_{\delta}\setminus M$. To this end, an application of the chain rule yields the equation
\[
\langle \nabla |\id - P_M|(x), u\rangle = \frac{\langle u -  \mathrm DP_M(x)[u], x-P_M(x)\rangle}{|x-P_M(x)|},
\]
for any vector $u\in\R^n$.
This together with the inclusions $\mathrm DP_M(x)[u]\in T_{P_M(x)}M$ and $x-P_M(x)\in N_{P_M(x)}M$, establishes (\ref{eq:proj_form}). 

For the last property, first note that $\nabla\phi=0$ holds outside $U_0$. Hence it is sufficient to consider only points $x\in U_0$. For such points, observe that the inequalities $|t\psi'(t)|\le \frac{7}{4}$ and $|x-P_M(x)|<\delta\circ P_M(x)$ hold. Since $\delta$ is non-expansive, and Corollary~\ref{L:tubnhd} yields the bound $\|\mathrm DP_M\|\le 2$, we readily obtain the inequality $|\nabla(\delta\circ P_M)|\le 2$. The claim now follows from the expression for~$\nabla\phi(x)$.
\end{proof}

\subsection{The induction step} \label{S:it1}
We begin with the following straightforward lemma.
\begin{lemma} \label{L:V_M}
Consider a $C^2$-stratification $\mathcal{A}$ of a set $Q\subset\R^n$.  Then there exists a family of sets $\{V_M\}_{M\in\mathcal{A}}$, where for each stratum $M\in\mathcal A$ the set $V_M$ is a neighborhood of $M$, and the following properties hold: 
\begin{itemize}
\item If we have $L\prec M$, then $L\cap V_M=\emptyset$, 
\item If $L$ and $M$ are incomparable, then $V_L\cap V_M=\emptyset$.
\end{itemize}
\end{lemma}

\begin{proof}
Consider any pair of strata $L,M\in\mathcal A$. If we have $L\prec M$, then  define $V_{L, M}:=(\cl L)^c$. Similarly, if we have $M\prec L$, then define $V_{M,L}:=(\cl M)^c$. If $L$ and $M$ are incomparable, then by the definition of a stratification, there exist disjoint neighborhoods $V_{L,M}$ of $M$ and $V_{M,L}$ of $L$. Observe that for any strata $L,M\in\mathcal A$, the set $V_{L,M}$, if defined, is a neighborhood of $M$ and $V_{M,L}$, if defined, is a neighborhood of $L$. 

Define now for each stratum $M$, the neighborhood $V_M$ to be the intersection of the (finitely many) sets $V_{L,M}$ where either $L\prec M$ or $L$ and $M$ are incomparable. If M is the only stratum, then simply define $V_M:=\R^n$. It is easy to see that this yields a family of sets $\{V_M\}_{M\in\mathcal{A}}$ with the desired properties.
\end{proof}

For the rest of this subsection, we fix the following objects:
\begin{itemize}[leftmargin=0.75cm]
\item $\mathcal A$, a $C^2$-stratification of some closed set $Q\subset\R^n$. We emphasize that $\mathcal A$ is not assumed to satisfy the Whitney condition~(a).
\item $M\in\mathcal A$, a stratum.
\item $\Gamma = (\textrm{cl } M) \setminus M$, a closed set, possibly empty. Since $Q$ is closed, the set $\Gamma$ is the union of all strata $L\in \mathcal A$ satisfying $L\prec M$.
\item $\varepsilon:\R^n\to(0,\infty)$, a continuous function.
\item $f:\R^n\to\R$, a locally Lipschitz continuous and differentiable function. We emphasize that the gradient $\nabla f$ is not assumed to be continuous.
\item $U_\delta$, a tubular neighborhood of $M$. It is obtained by applying Corollary~\ref{L:tubnhd} with the functions $\varepsilon$, $f$, and $\Gamma$ as above, and $V$ taken to be the neighborhood $V_M$ of $M$ given by Lemma~\ref{L:V_M}.
\item $\phi$, the interpolation function corresponding to $U_\delta$.
\end{itemize}

Since $\Gamma$ is a union of pairwise disjoint submanifolds in $\mathcal A$, we can unambiguously let $T_x\Gamma$ and $N_x\Gamma$, for $x\in\Gamma$, denote the tangent and normal spaces of the submanifold where $x$ lies. Note that we have so far made no assumptions on the geometry of $Q$ beyond the existence of a $C^2$-stratification.

\begin{proposition}[differentiable approximation] \label{P:grad1} {\hfill \\}
Assume that the inclusion $\nabla f(x) \in T_x\Gamma$ holds for every $x\in\Gamma$, and define
\[
g = \phi  f \circ P_M + (1-\phi) f.
\]
Then $g$ satisfies the following properties.
\begin{itemize}[leftmargin=0.75cm]
\item $g$ is locally Lipschitz continuous and differentiable.
\item $\nabla g(x) = \nabla f(x)$ for all $x\in\Gamma$, and $\nabla g(x) \in T_xM$ for all $x\in M$.
\item $|f(x)-g(x)| \le \varepsilon(x)$ for all $x\in\R^n$.
\item if $f$ is Lipschitz continuous, then so is $g$, and the inequality $\lip g \leq 11 ~\lip f$ holds.
\end{itemize}
If $\mathcal A$ satisfies the Whitney condition~(a) and $f$ is $C^1$-smooth, then $g$ is $C^1$-smooth as well. 
\end{proposition}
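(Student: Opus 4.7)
The plan is to study $g = \phi \cdot f\circ P_M + (1-\phi) f$ region by region. I would split $\R^n$ into three pieces: (a) $U_\delta^c \cup \Gamma$, on which $\phi \equiv 0$ and so $g = f$ trivially; (b) a neighborhood of $M$ inside $U_\delta$ where $\phi \equiv 1$ and so $g = f \circ P_M$; and (c) the annular region $U_0 \subset \Gamma^c$, where both $\phi$ and $P_M$ are ${\bf C}^p$-smooth. On (b) and (c), standard product and chain rules---valid for the Frech\'{e}t differentiable $f$---yield Frech\'{e}t differentiability of $g$ together with the formula
\[
\nabla g = \phi \nabla(f\circ P_M) + (1-\phi)\nabla f + (f\circ P_M - f)\nabla\phi.
\]
At any $x \in M$ the identity $\mathrm{D}P_M(x) = \Pi_{T_x M}$ together with the chain rule gives $\nabla g(x) = \Pi_{T_x M}\nabla f(x) \in T_x M$, exactly as required.

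The delicate Frech\'{e}t check is at a point $x \in \Gamma$, where $\phi$ is not smooth. Here I would exploit Corollary~\ref{L:tubnhd}(4): since $d(x+h,\Gamma) \le |h|$, we obtain $|(x+h)-P_M(x+h)| = O(|h|^2)$ whenever $x+h \in U_\delta$, and the local Lipschitz property of $f$ then upgrades this to $f(P_M(x+h)) - f(x+h) = O(|h|^2)$. Substituting the Frech\'{e}t expansion of $f$ at $x$ into the definition of $g$ produces $g(x+h) - g(x) = \langle \nabla f(x), h\rangle + o(|h|)$, so $\nabla g(x) = \nabla f(x)$, which lies in $T_x \Gamma$ by hypothesis.

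Uniform closeness follows immediately from $|g-f| \le \phi\,|f\circ P_M - f| \le \varepsilon$ by Corollary~\ref{L:tubnhd}(3). For the Lipschitz bound I would estimate $|\nabla g|$ termwise on $U_0$: $|\nabla(f\circ P_M)| \le \|\mathrm{D}P_M\|\,\lip f \le 2\lip f$ by Corollary~\ref{L:tubnhd}(6); the cross term $|f\circ P_M - f|\cdot|\nabla\phi| \le \lip f\cdot|x-P_M(x)|\cdot|\nabla\phi| \le 8\lip f$ by Lemma~\ref{L:phi1}(iv); and the convex combination $\phi\cdot 2\lip f + (1-\phi)\lip f \le 2\lip f$. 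Summing gives $|\nabla g| \le 10\,\lip f$, and the mean-value inequality---valid because $g$ is Frech\'{e}t differentiable on all of $\R^n$---transfers this bound to $\lip g \le 12\,\lip f$.

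The main obstacle is the final claim: continuity of $\nabla g$ at $x_0 \in \Gamma$ under the Whitney hypothesis and $f\in {\bf C}^1$. Since $g$ is already ${\bf C}^1$ on $\Gamma^c$, it suffices to show $\nabla g(x_k) \to \nabla f(x_0)$ for $x_k \to x_0$ lying in $U_\delta$; set $y_k = P_M(x_k) \to x_0$ and $u_k = (x_k-y_k)/|x_k-y_k| \in N_{y_k}M$. For the first two terms of $\nabla g(x_k)$, Corollary~\ref{L:tubnhd}(5) lets me replace $\mathrm{D}P_M(x_k)$ by $\Pi_{T_{y_k}M}$ up to $o(1)$ error; the normal component $\Pi_{N_{y_k}M}\nabla f(y_k)$ then vanishes in the limit because any subsequential limit $v$ lies in $N_{x_0}L$ by Whitney~(a) and hence is orthogonal to $\nabla f(x_0) \in T_{x_0}L$, while passing to the limit in the identity $\langle \Pi_{N_{y_k}M}\nabla f(y_k), \nabla f(y_k)\rangle = |\Pi_{N_{y_k}M}\nabla f(y_k)|^2$ forces $|v|^2 = \langle v, \nabla f(x_0)\rangle = 0$. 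For the cross term $(f\circ P_M - f)\nabla\phi$, the explicit formula in Lemma~\ref{L:phi1}(iii) exposes the factor $\langle \nabla f(\xi_k), u_k\rangle$ via the mean-value theorem applied to $f$ along $[y_k,x_k]$; since $\nabla f(\xi_k) \to \nabla f(x_0) \in T_{x_0}L$ while any limit of $u_k$ is a normal vector in $N_{x_0}L$ by Whitney~(a), this inner product tends to zero, and boundedness of the remaining factors in Lemma~\ref{L:phi1}(iii),(iv) kills the whole term. Combining these observations yields $\nabla g(x_k) \to \nabla f(x_0) = \nabla g(x_0)$, completing the proof.
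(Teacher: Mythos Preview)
Your argument is correct and follows essentially the same route as the paper: the region decomposition, the use of the bound $|x-P_M(x)|<d(x,\Gamma)^2$ to get Fr\'echet differentiability at points of $\Gamma$, the termwise Lipschitz estimate via Lemma~\ref{L:phi1}(iv), and the Whitney~(a) argument for continuity of $\nabla g$ at $\Gamma$ all match the paper's proof. One small difference worth noting is your treatment of $\nabla(f\circ P_M)(x_k)\to\nabla f(x_0)$: rather than testing against an arbitrary direction $w$ and splitting into three pieces as the paper does, you write $\nabla(f\circ P_M)(x_k)=\nabla f(y_k)-\Pi_{N_{y_k}M}\nabla f(y_k)+o(1)$ and kill the normal projection via the self-adjointness identity $\langle \Pi_{N_{y_k}M}\nabla f(y_k),\nabla f(y_k)\rangle=|\Pi_{N_{y_k}M}\nabla f(y_k)|^2$ combined with Whitney~(a). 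This is a clean shortcut and yields the same conclusion; your Lipschitz bound of $10\,\lip f$ is in fact slightly sharper than the paper's $12\,\lip f$, so the stated inequality certainly holds.
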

\begin{proof}
It is convenient to split the proof into a number of steps.

{\it Step 1 (differentiability).} We show that $g$ is differentiable at each $\bar{x}\in \Gamma$ with $\nabla g(\bar{x}) = \nabla f(\bar{x})$. This is sufficient to establish the claim, since clearly $g$ is differentiable on $\Gamma^c$. To this end, let $v_k$ be any sequence of unit vectors in $\R^n$ converging to some vector $\bar v$, and let $t_k$ be a sequence of positive reals converging to $0$. We set $x_k:=\bar{x}+t_kv_k$, and assuming without loss of generality $x_k\in U_\delta$ for each $k$, we define $y_k:=P_M(x_k)$. Observe $g(\bar{x})=f(\bar{x})$, and consequently we have
\begin{equation} \label{eq:grad1}
\frac{g(x_k) - g(\bar{x})}{t_k} = \frac{f(x_k)-f(\bar{x})}{t_k} + \phi(x_k) \frac{f(y_k) - f(x_k)}{t_k}.
\end{equation}
The first term on the right-hand-side converges to $\langle \nabla f(\bar{x}), \bar v\rangle$. For the second term, the Mean Value Theorem applied to the function $t\mapsto f(x_k+t(y_k-x_k))$ shows that there exists a vector $z_k$, lying on the line segment joining $x_k$ and $y_k$, satisfying $f(y_k)-f(x_k)=\langle \nabla f(z_k),y_k-x_k\rangle$. Hence we obtain
\[
\frac{|f(y_k) - f(x_k)|}{t_k} \le  |\nabla f(z_k)| \frac{|y_k-x_k|}{t_k} <  |\nabla f(z_k)| t_k,
\]
where we used the inequality $|y_k-x_k| < d(x_k,\Gamma)^2 \le |x_k-\bar{x}|^2 = t_k^2$, which is a direct consequence of Corollary~\ref{L:tubnhd}. Since $\nabla f$ is locally bounded due to local Lipschitzness of $f$, we see that the second term on the right-hand-side of (\ref{eq:grad1}) converges to zero, thereby establishing the claim.

{\it Step 2 (closeness, tangency condition, Lipschitz modulus).} The closeness of~$g$ to~$f$ follows from the simple calculation
\[
|f(x)-g(x)| = \phi(x) |f(x) - f\circ P_M(x)| \le \varepsilon(x),
\]
where the last inequality uses Corollary~\ref{L:tubnhd}.
Next, we verify the tangency condition. For any $x\in M$ and $v\in N_xM$ we have $g(x+t v) = f(x)$ for all $t$ close to zero, since the equality $\phi=1$ holds in a whole neighborhood of $x$. It follows that the directional derivative of $g$ in the direction $v$ vanishes at $x$, and hence we deduce $\nabla g(x)\in T_xM$ as desired. Finally, we prove that~$g$ is locally Lipschitz continuous and derive a bound on its Lipschitz modulus. For $x\in\Gamma^c$, we have
\[
\nabla g(x) = (1-\phi(x))\nabla f(x) +  \nabla \phi(x) ( f\circ P_M (x)- f(x)) +  \phi(x) \nabla (f\circ P_M)(x),
\]
and hence
\[
|\nabla g(x)| \le |\nabla f(x)| + |\nabla\phi(x)| \ |f\circ P_M(x)-f(x)| + \phi(x)|\nabla(f\circ P_M)(x)| .
\]
For any $x\in U_\delta$ and $w\in\R^n$ we have
\[
\langle \nabla (f\circ P_M)(x), w\rangle = \langle \nabla f (P_M(x)), {\rm D}P_M(x)w \rangle,
\]
so that, due to Corollary~\ref{L:tubnhd}(6), we have $|\nabla(f\circ P_M)(x)|\le 2|\nabla f(P_M(x))|$.
Moreover, by the Mean Value Theorem there exists a point $z$ on the line segment joining $x$ and $P_M(x)$, satisfying $f(x)-f(P_M(x))=\langle \nabla f(z),x-P_M(x)\rangle$. Consequently we deduce 
\begin{align*}
|\nabla g(x)|
&\le |\nabla f(x)| + 2\phi(x) |\nabla f(P_M(x))| + |\nabla \phi(x)|\ |x-P_M(x)|\ |\nabla f(z)| \\
&\le
\begin{cases}
|\nabla f(x)|, &  x \notin U_\delta \\[2mm]
|\nabla f(x)| + 2|\nabla f(P_M(x))| + 8|\nabla f(z)|, & x \in U_\delta,
\end{cases}
\end{align*}
where the second inequality used Lemma~\ref{L:phi1}(4). Given any compact set $K\subset\R^n$, the points $P_M(x)$ and $z$ remain inside some other compact set $K'$ as $x$ varies over $K\cap U_\delta$. This yields the inequality
\[
\sup_{x\in K}|\nabla g(x)| \le 11 \sup_{x\in K'} |\nabla f(x)|.
\]
It easily follows that $g$ is locally Lipschitz continuous. If $f$ is globally Lipschitz continuous, then so is $g$, and we have $\lip g \le 11 ~\lip f$, as claimed.

Assume now that $\mathcal A$ satisfies the Whitney condition~(a) and $f$ is $C^1$-smooth.

{\it Step 3 (continuity of the gradient).} We now show that the gradient mapping $\nabla g$ is continuous. It is enough to check continuity at each point $\bar{x}\in\Gamma$. For $x\in\Gamma^c$ we have
\begin{equation} \label{eq:gradg}
\nabla g(x) - \nabla f(x) =  \nabla \phi(x) ( f\circ P_M (x)- f(x)) +  \phi(x) (\nabla (f\circ P_M)(x) - \nabla f(x)).
\end{equation}
Since $\nabla f(\bar x)=\nabla g(\bar x)$ holds and $\nabla f$ is continuous, it suffices to show that the right-hand-side converges to zero as $x$ tends to $\bar{x}$ with $x\in U_\delta$. We do this term by term. To deal with the first term, we define $y:=P_M(x)$ and apply the Mean Value Theorem to obtain a vector $z$ on the line segment joining $x$ and $y$, satisfying $f(y)-f(x)=\langle \nabla f(z),y-x\rangle$. Assuming without loss of generality $x\ne y$, we obtain
\[
|f(y)-f(x)| = |y-x| \langle \nabla f(z), v \rangle,
\]
where $v:=\frac{y-x}{|y-x|}\in N_yM$. Whenever $v$ converges to some vector $\bar v$, we have $\bar v\in N_{\bar{x}}\Gamma$ by the Whitney condition~(a). Consequently along any convergent subsequence, we obtain $\langle \nabla f(z), v \rangle \to \langle \nabla f(\bar{x}),\bar v\rangle=0$, since $\nabla f$ is continuous and $\nabla f(\bar{x})$ lies in $T_{\bar{x}}\Gamma$. Finally recalling from Lemma~\ref{L:phi1} that the quantity $|\nabla \phi(x)|\,|y-x|$ is bounded, we deduce that the first term on the right-hand-side of~\eqref{eq:gradg} tends to zero.  

For the second term, it suffices to show $|\nabla(f\circ P_M)(x)-\nabla f(x)|\to 0$ as $x\to\bar{x}$ with $x\in U_\delta$. To this end, apply the equality
\[
\langle \nabla(f\circ P_M)(x), w\rangle = \langle \nabla f(y), \textrm{D}P_M(x)w \rangle,
\]
valid for any $w\in\R^n$, to obtain
\begin{align*}
\langle \nabla(f\circ P_M)(x)-\nabla f(x), w\rangle
&= \langle \nabla f(x), \textrm{D}P_M(y)w - w \rangle \\[2mm]
&\qquad + \langle \nabla f(y) - \nabla f(x), \textrm{D}P_M(y)w \rangle \\[2mm]
&\qquad\qquad + \langle \nabla f(y), \textrm{D}P_M(x)w - \textrm{D}P_M(y)w \rangle.
\end{align*}
Observe that the inclusion $\textrm{D}P_M(y)w - w\in N_yM$ holds, and so any limit point of such vectors lies in $N_{\bar{x}}\Gamma$. Hence along every convergent subsequence, the first term vanishes. The second and third terms vanish due to the continuity of $\nabla f$, boundedness of $\|\textrm{D}P_M(y)\|$, and the inequality $\| \textrm{D} P_M(x) - \textrm{D}P_M(y)\| < d(y, \Gamma)$ established in Corollary~\ref{L:tubnhd}. This concludes the proof of Step~3.
\end{proof}

If we assume that the stratification $\mathcal A$ is normally flat, stronger approximation results can be obtained. First we record the following observation, which is a direct consequence of the finiteness of the number of strata and the defining property of normally flat stratifications.

\begin{lemma} \label{L:A1}
Consider a normally flat $C^p$-stratification $\mathcal A$ of a set $Q\subset\R^n$ and a stratum $M$. Then there exists a neighborhood $\widehat U$ of~$M$ such that for each stratum $L\in\mathcal A$ with $L\prec M$, there is a neighborhood $\widehat V$ of $L$ with $P_L(x)=P_L\circ P_M(x)$ for all $x\in \widehat U\cap\widehat V$.
\end{lemma}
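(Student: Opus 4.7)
The claim is essentially a straightforward consequence of normal flatness combined with finiteness of the stratification. The only subtlety is the \emph{uniformity} in $M$: normal flatness supplies, for each pair $(L,M)$ with $L\subset(\cl M)\setminus M$, a neighborhood of $M$ on which the projection identity holds, but this neighborhood a priori depends on $L$. Finiteness will let us replace all of them by a single neighborhood of $M$.

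My plan is the following. Let $L_1,\ldots,L_r$ enumerate all strata satisfying $L_i\subset(\cl M)\setminus M$; by hypothesis on $\mathcal A$ there are only finitely many of them. Apply Definition~\ref{A:1} to each pair $(L_i, M)$: this yields open neighborhoods $V_i$ of $L_i$ and $U_i$ of $M$ such that
\[
P_{L_i}(x) = P_{L_i}\circ P_M(x) \quad\text{for all } x\in V_i\cap U_i.
\]
Now set
\[
\widehat U := \bigcap_{i=1}^{r} U_i, \qquad \widehat V_i := V_i.
\]
Since the intersection is finite, $\widehat U$ is still an open neighborhood of $M$, and clearly $\widehat U\subset U_i$ for every $i$. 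Hence for each $L = L_i$ and every $x\in \widehat U\cap \widehat V_i$, the identity $P_L(x) = P_L\circ P_M(x)$ holds.

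The main (mild) obstacle is simply keeping track of quantifiers: the definition of normal flatness produces a neighborhood of $M$ that could shrink as $L$ varies, and one must exploit finiteness of $\mathcal A$ to pull out a single $\widehat U$ independent of $L$, while leaving the $L$-dependence only in $\widehat V_i$. No further regularity of the strata or of the projections is needed, and the frontier condition plus our standing convention on dimensions are used only implicitly to guarantee that the list $\{L_i\}$ is well-defined and finite.
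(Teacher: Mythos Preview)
Your argument is correct and is exactly the reasoning the paper has in mind: the lemma is stated there as an observation ``which is a direct consequence of the finiteness of the number of strata and the defining property of normally flat stratifications,'' and your finite-intersection argument is precisely that consequence spelled out.
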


\begin{proposition}[Arbitrarily smooth approximation] \label{P:Cp}
Suppose that the stratification $\mathcal A$ is normally flat, and that $U_\delta$ is contained in $\widehat U$ from Lemma~\ref{L:A1}. Assume that each stratum $L\in\mathcal A$ with $L\prec M$ has a neighborhood $V$ with
\[
f(x) = f\circ P_L(x) \quad \textrm{for all} \quad x\in V.
\]
Define a function $g = \phi  f \circ P_M + (1-\phi) f$. Then each stratum $L\in\mathcal A$ with $L\subset \cl M$ has a neighborhood $W$ so that the equality
\[
g(x) = g\circ P_L(x) \quad \textrm{holds for all} \quad x\in W.
\]
\end{proposition}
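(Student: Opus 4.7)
The strategy is to split into the two cases that can occur for a stratum $L\subset \cl M$: either $L=M$, or $L\subsetneq \cl M$, in which case $L\subset\Gamma$ by the frontier condition and the dimension convention. In both cases I aim to exhibit a neighborhood $W$ of $L$ on which both $g(x)$ and $g\circ P_L(x)$ reduce to the same expression; the key simplifications are that $\phi$ vanishes on $\Gamma$ and that the normally flat identity $P_L=P_L\circ P_M$ collapses the inner and outer pieces of $g$.

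\textbf{The case $L=M$.} Take $W$ to be the inner tube $\{x\in U_\delta: |x-P_M(x)|<\tfrac14\delta\circ P_M(x)\}$, which is open, contains $M$, and on which $\phi\equiv 1$ by Lemma~\ref{L:phi1}(i). On this set $g(x)=f\circ P_M(x)$; moreover $P_M(x)\in M\subset W$ and $P_M$ restricted to $M$ is the identity, so $\phi(P_M(x))=\psi(0)=1$ and hence $g(P_M(x))=f\circ P_M(x)$. Thus $g(x)=g\circ P_M(x)=g\circ P_L(x)$.

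\textbf{The case $L\subset\Gamma$.} Let $V$ be the neighborhood of $L$ where $f=f\circ P_L$, supplied by hypothesis, and let $\widehat V$ be the neighborhood from Lemma~\ref{L:A1} on which $P_L=P_L\circ P_M$; Lemma~\ref{L:A1} applies because we assumed $U_\delta\subset\widehat U$. Shrink $W\subset V\cap\widehat V$ further so that $P_M(W\cap U_\delta)\subset V$; this shrinkage is possible because $|x-P_M(x)|=d(x,M)\le d(x,L)\to 0$ as $x\to\bar x\in L$, so $P_M(x)\to\bar x$ by the triangle inequality, and $P_M$ is continuous on $U_\delta$. Now I handle $x\in W$ in two sub-cases. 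If $x\notin U_\delta$ then $\phi(x)=0$, giving $g(x)=f(x)=f\circ P_L(x)$. If $x\in W\cap U_\delta$, then $x,P_M(x)\in V$, the hypothesis yields $f(x)=f\circ P_L(x)$ and $f\circ P_M(x)=f\circ P_L\circ P_M(x)$, and normal flatness gives $P_L\circ P_M(x)=P_L(x)$; so both terms in the convex combination defining $g(x)$ equal $f\circ P_L(x)$ and again $g(x)=f\circ P_L(x)$. Finally, $P_L(x)\in L\subset\Gamma\subset U_\delta^{c}$, so $\phi(P_L(x))=0$ and thus $g(P_L(x))=f(P_L(x))$, matching $g(x)$.

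\textbf{Main obstacle.} The computation itself is a one-line collapse once every projection is applied in the right neighborhood; the delicate part is the bookkeeping. I must simultaneously place $x$, $P_M(x)$ and $P_L(x)$ inside their respective good tubes, and it is precisely the normal flatness condition---via Lemma~\ref{L:A1}---that makes the composition $P_L\circ P_M$ collapse to $P_L$ and thereby lets the ``outer piece'' $f\circ P_M$ inherit the normal-invariance of $f$ at $L$. Splitting the argument at the boundary of $U_\delta$ is necessary because $P_M$ is only defined (and smooth) on $U_\delta$, but outside $U_\delta$ the interpolation function vanishes identically and the identity $g=f$ makes the desired equality automatic from the hypothesis on $f$.
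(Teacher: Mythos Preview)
Your proof is correct and follows essentially the same route as the paper's own argument: the same case split $L=M$ versus $L\subset\Gamma$, the same choice of $W$ as the inner tube where $\phi\equiv 1$ in the first case, and the same shrinking of $W$ inside $V\cap\widehat V$ together with the sub-split on $U_\delta$ in the second case. The only differences are cosmetic---you spell out why $P_M(x)\to\bar x$ as $x\to\bar x\in L$ (the paper just cites continuity of $P_M$ on $U_\delta$), and you note explicitly that $\Gamma\subset U_\delta^c$ forces $\phi(P_L(x))=0$.
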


\begin{proof}
We start with the case $L=M$. To this end, let $W$ be the subset of $U_\delta$ where the equality $\phi=1$ holds. Then $W$ is a neighborhood of $M$, and for any point $x\in W$, we have
\[
g(x) = f\circ P_M(x) = f\circ P_M \circ P_M(x) = g \circ P_M(x),
\]
as required. Consider now a stratum $L\in\mathcal A$ with $L\prec M$. By hypothesis there is a neighborhood $V$ of $L$ in which the equality $f=f\circ P_L$ holds. Now pick a new neighborhood $W$ of $L$ satisfying
\begin{itemize}[leftmargin=0.75cm]
\item $W\subset V$,
\item $P_M(x)\in V$ for all $x\in W\cap U_\delta$,
\item $P_L\circ P_M(x) = P_L(x)$ for all $x\in W\cap U_\delta$.
\end{itemize}
To see that such a neighborhood can be found, first note that it is easy to find $W$ satisfying the first two properties, thanks to the continuity of $P_M$ on $U_{\delta}$: simply intersect $V$ with a sufficiently small open set containing $L$. Then replace $W$ by the smaller set $W\cap\widehat V$, where $\widehat V$ is as in Lemma~\ref{L:A1}. The resulting set, which we again denote by $W$, satisfies all three properties.

We now verify the equality $g=g\circ P_L$ on $W$. First consider a point $x\in W \setminus U_\delta$. Recall that we have $\phi=0$ outside $U_\delta$ and $f=f\circ P_L$ on $W$. Hence we obtain
\[
g(x) = f(x) = f\circ P_L(x) = g\circ P_L(x).
\]
Now consider instead a point $x\in W\cap U_\delta$. The inclusion $P_M(x)\in V$, the equality $f=f\circ P_L$ on $V$, and the normal flatness condition yield
\[
f\circ P_M(x) = f\circ P_L\circ P_M(x) = f \circ P_L(x).
\]
Consequently, we deduce
\[
g(x) = \phi(x) f\circ P_M(x) + (1-\phi(x))f(x) = f\circ P_L(x).
\]
Finally note that the right-hand-side equals $g\circ P_L(x)$, since we have $\phi(P_L(x))=0$. This establishes the claim.
\end{proof}

We end this subsection by recording the following remark.
\begin{remark} \label{R:Cp}
{\rm
In the notation of Proposition~\ref{P:Cp}, observe that since the equality $\phi=0$ holds on $\Gamma=(\cl M)\setminus M$, we have $g=f$ there. Consequently for any stratum~$L$ contained in $\Gamma$, there exists a neighborhood $V$ of $L$, so that for all $x\in V$ we have
\[
g(x) = g\circ P_L(x) = f\circ P_L(x) = f(x).
\]
Hence if $f$ is $C^p$-smooth and the restriction of $\phi$ to $\Gamma^c$ is $C^p$-smooth, then $g$ is $C^p$-smooth as well. }
\end{remark}

\subsection{Proofs of main results} \label{S:pf}

Given the previous developments, the proofs of our main results are entirely straightforward.

\begin{proof}[Proof of Theorem~\ref{thm:main}]
We first establish some notation. Given a differentiable function $h$ we say that $h$ is {\em tangential to $M\in\mathcal A$} if the inclusion $\nabla h(x)\in T_xM$ holds for all $x\in M$. We let $\mathcal T_h$ be the collection of $M\in\mathcal A$ such that $h$ is tangential to~$M$, and we define $m$ to be the depth of $\mathcal{A}$.

The proof proceeds by induction on the partial order~$\prec$. In preparation for this, we set $f_0:=f$ if $f$ is already differentiable and locally Lipschitz continuous. Otherwise we let $f_0$ be a $C^1$-smooth function that differs from $f$ by at most $\varepsilon/(m+1)$, and whose Lipschitz modulus (when $f$ is Lipschitz continuous) is at most 11 times that of~$f$.

For the induction step, suppose a differentiable locally Lipschitz continuous function~$h$ is given, pick $M\in\mathcal A$ with $M\notin\mathcal T_{h}$, and assume we have $\{L\in\mathcal A: L\prec M\}\subset \mathcal T_{h}$ (this is the induction assumption). Proposition~\ref{P:grad1} then gives a new differentiable locally Lipschitz continuous function~$h_1$ such that $\mathcal T_{h_1}=\mathcal T_{h}\cup\{M\}$ and $|h-h_1|<\varepsilon/(m+1)$ hold. 

Observe that for each minimal stratum $M$ the family $\{L\in\mathcal A:L\prec M\}$ is empty, so the induction assumption is vacuously true for $f_0$. It then follows by induction on the partial order~$\prec$ that we may find a differentiable locally Lipschitz continuous function~$g$ such that~$\mathcal T_g=\mathcal A$.

At each application of Proposition~\ref{P:grad1}, the modified function differs from the previous one by at most $\varepsilon/(m+1)$. However, the modifications associated with incomparable strata do not overlap. This is because the tubular neighborhood $U_\delta$ corresponding to $M\in\mathcal A$ is contained in $V_M$ from Lemma~\ref{L:V_M}. Hence the total error in the end is only $\varepsilon m/(m+1)$. Together with a possible initial error of $\varepsilon/(m+1)$ if~$f$ is not differentiable and locally Lipschitz continuous, this results in the claimed bound $|f-g|<\varepsilon$. Similarly, we lose a factor of $11^{m+1}$ in the Lipschitz modulus. To ensure that the support of $g$ is contained in $V_1$, we simply ensure that the tubular neighborhood $U_\delta$ is contained in $V_1$ at each application of Proposition~\ref{P:grad1}. The same can be done with $V_2$ whenever the proposition is applied with a stratum $M$ such that $\dim M<n$. For $M$ with $\dim M=n$, the tubular neighborhood $U_\delta$ coincides with $M$, and we have $P_M=\id$ on $M$. Hence in this case Proposition~\ref{P:grad1} yields a function that is identical to the previous one. We deduce that if $f$ was already $C^1$-smooth, so that no initial smoothing took place (i.e., $f_0=f$), then $f=g$ holds outside~$V_2$. Finally, in case $\mathcal A$ satisfies the Whitney condition~(a), the $C^1$-smoothness of $f_0$ is preserved after each application of Proposition~\ref{P:grad1}, and this yields the final assertion of Theorem~\ref{thm:main}.
\end{proof}

\begin{proof}[Proof of Theorem~\ref{T:smooth}]
The result follows exactly as in the proof of Theorem~\ref{thm:main}, except that in each step we additionally apply Proposition~\ref{P:Cp}. This can be done, provided we shrink $U_\delta$ if necessary. Note that the smoothness carries over in each step by Remark~\ref{R:Cp}. The function $g$ obtained in the end has the property that every stratum $M\in\mathcal A$ has a neighborhood on which $g=g\circ P_M$ holds.
\end{proof}

\section{Normally flat stratifications} \label{S:A1}
In this section we discuss important examples of sets admitting normally flat stratifications. First, however, we observe that this notion is strictly stronger than the Whitney condition~(a).

\begin{proposition}[Normally flat stratifications are Whitney (a)-regular]\label{P:A1Wh} {\hfill \\ }
If a $C^p$-stratification of a set $Q\subset\R^n$ is normally flat, then for any strata $L$ and $M$ with $L\prec M$ there exists a neighborhood $V$ of $L$ so that the inclusion 
\[N_{x}M\subset N_{P_L(x)}L \quad\textrm{ holds for all } x\in M\cap V.\]
Consequently normally flat $C^p$-stratifications satisfy the Whitney condition (a).
\end{proposition}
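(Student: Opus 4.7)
The plan is to use the curve $t\mapsto x+tv$ emanating from a point $x\in M$ in a normal direction $v\in N_xM$, and to transfer the normal-flatness identity $P_L=P_L\circ P_M$ into a statement about the relative positions of $x+tv$ and $P_L(x)$.

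First I would fix strata $L$ and $M$ with $L\subset(\cl M)\setminus M$, and invoke the normal flatness condition to obtain neighborhoods $V$ of $L$ and $U$ of $M$ with $P_L=P_L\circ P_M$ throughout $V\cap U$. Shrinking $V$ and $U$ if necessary, I would also arrange that $P_M$ is single-valued and ${\bf C}^p$-smooth on $U$ and that $P_L$ is single-valued and continuous on $V$. Now for any $x\in M\cap V$ and any $v\in N_xM$, I would choose $t_0>0$ small enough that $x+tv$ lies in $V\cap U$ and satisfies $P_M(x+tv)=x$ for all $t\in[0,t_0]$; this is possible because $x\in M$, $v$ is normal to $M$ at $x$, and $P_M$ is the (smooth) normal-bundle projection.

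Next I would apply the normal flatness identity to compute
\[
P_L(x+tv)=P_L\circ P_M(x+tv)=P_L(x).
\]
By first-order optimality for the distance minimization to the smooth manifold $L$, this equality forces $x+tv-P_L(x)\in N_{P_L(x)}L$ for every $t\in[0,t_0]$. The case $t=0$ gives $x-P_L(x)\in N_{P_L(x)}L$, and subtracting yields $tv\in N_{P_L(x)}L$, so $v\in N_{P_L(x)}L$. Since $v\in N_xM$ was arbitrary, the inclusion $N_xM\subset N_{P_L(x)}L$ holds on all of $M\cap V$.

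For the consequence about Whitney (a), suppose $x_k\in M$ converges to $\bar x\in L$ and $v_k\in N_{x_k}M$ converges to some $v$. For large $k$ the points $x_k$ lie in $V$, so the first part gives $v_k\in N_{P_L(x_k)}L$. Continuity of $P_L$ on $V$ yields $P_L(x_k)\to P_L(\bar x)=\bar x$, and the normal bundle $NL$ is closed as a subset of $\R^n\times\R^n$ (since $L$ is a ${\bf C}^p$ manifold), so passing to the limit gives $v\in N_{\bar x}L$, which is exactly Whitney condition (a). The only mildly delicate point is the justification that for sufficiently small $t$ the straight segment $x+tv$ remains in the designated neighborhoods and projects onto $M$ at $x$; once that local picture is in hand, the rest is formal.
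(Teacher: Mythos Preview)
Your proof is correct and follows essentially the same route as the paper: both arguments take $y=x+tv$ for small $t$, use $P_M(y)=x$ together with the normal-flatness identity to get $P_L(y)=P_L(x)$, and then decompose $tv=(y-P_L(y))-(x-P_L(x))$ as a sum of two vectors in $N_{P_L(x)}L$. Your write-up is slightly more explicit about single-valuedness of the projections and about the limiting argument for Whitney~(a), but the substance is the same.
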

\begin{proof}
Since the stratification is normally flat, there exist neighborhoods $V$ of $L$ and $U$ of $M$ so that the equality
\[
P_L(x) = P_L \circ P_M(x) \quad \textrm{ holds for all}\quad x\in V\cap U.
\]
Consider a point $x \in M\cap V$, and let $v\in N_{x}M$ be an arbitrary normal vector. Define $y=x+t v$ for some $t>0$ that is sufficiently small to guarantee the inclusion $y\in U$. Then we have $x=P_M(y)$, and consequently $P_L(x)=P_L\circ P_M(y)=P_L(y)$. We deduce
\[
t v = y - x = \left( y - P_L(y) \right) + \left( P_L(x) - x\right)\in N_{P_L(x)}L,
\]
as we had to show. The claim that normally flat $C^p$-stratifications satisfy the Whitney condition~(a) is now immediate.  
\end{proof}

The following simple example confirms that the normal flatness condition is indeed strictly stronger than the Whitney condition~(a).

\begin{example}[Whitney (a)-regular stratification that is not normally flat]{\hfill \\}
Consider the set \[Q=\{(x,y,z)\in\R^3: z=xy, x\ge 0\},\] together with the stratification $\mathcal A=\{M_1,M_2\}$, where the strata are defined by 
\[M_1=\{0\}\times\R\times\{0\} \quad\textrm{ and }\quad M_2=\{(x,y,z)\in Q: x> 0\}.\] Clearly $\mathcal{A}$ is a Whitney (a)-regular stratification. On the other hand, this stratification is not normally flat. To see this, first note the equivalence \[P_{M_1}(x,y,z) = (0,0,0)\quad \Longleftrightarrow\quad y=0.\] On the other hand, $P_{M_2}(x,0,z)$ is a singleton and has a non-zero $y$-coordinate whenever we have $x>0$ and $z\ne 0$.
\end{example}

To record our first examples of normally flat stratifications, we need the following standard result.
\begin{lemma}[Ordered affine subspaces]\label{lem:aff}
Consider affine subspaces $L$ and $M$ in~$\R^n$, satisfying the inclusion $L\subset M$. Then  the equality
\[
P_L(x) = P_L \circ P_M(x) \quad \textrm{holds for all}\quad x\in \R^n.
\]
\end{lemma}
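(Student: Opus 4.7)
The plan is to reduce the affine statement to a statement about orthogonal projections onto nested linear subspaces, and then verify that statement directly from the orthogonality characterization of the projection.

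First, I would exploit the hypothesis $L\subset M$ to pick a common basepoint. Since $L$ is nonempty, choose $l_0\in L$; then $l_0\in M$ as well, so I can write $L=l_0+V_L$ and $M=l_0+V_M$ for linear subspaces $V_L\subseteq V_M$. In this representation, the metric projections take the form
\[
P_M(x)=l_0+\pi_M(x-l_0),\qquad P_L(x)=l_0+\pi_L(x-l_0),
\]
where $\pi_L$ and $\pi_M$ denote orthogonal projections onto the linear subspaces $V_L$ and $V_M$. Hence the lemma reduces to verifying the linear identity $\pi_L=\pi_L\circ\pi_M$ on $\R^n$.

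To prove this linear identity, I would use the orthogonality characterization: for a linear subspace $W$, a point $z\in W$ equals $\pi_W(u)$ if and only if $u-z\in W^{\perp}$. Given $x\in\R^n$, set $y=\pi_M(x)$ and $z=\pi_L(y)$. Clearly $z\in V_L$. Writing $x-z=(x-y)+(y-z)$, the first summand lies in $V_M^{\perp}\subseteq V_L^{\perp}$ (because $V_L\subseteq V_M$ gives $V_M^{\perp}\subseteq V_L^{\perp}$), and the second summand lies in $V_L^{\perp}$ by the definition of $z=\pi_L(y)$. Hence $x-z\in V_L^{\perp}$, which forces $z=\pi_L(x)$ by uniqueness of the projection. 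Pulling this back through the translation gives $P_L\circ P_M(x)=P_L(x)$, as required.

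There is no real obstacle here: the argument is essentially bookkeeping with the orthogonal decomposition $\R^n=V_L\oplus(V_L^{\perp}\cap V_M)\oplus V_M^{\perp}$. The only point worth being careful about is the initial translation, which works cleanly precisely because the inclusion $L\subset M$ guarantees that a single basepoint $l_0$ can be used for both affine subspaces simultaneously.
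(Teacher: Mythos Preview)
Your argument is correct and is essentially the same as the paper's: both hinge on the orthogonal decomposition $x - P_L(P_M(x)) = (x - P_M(x)) + (P_M(x) - P_L(P_M(x)))$, with the first summand normal to $M$ (hence to $L$) and the second normal to $L$. The only cosmetic difference is that you translate to a common basepoint and invoke the orthogonality characterization of the linear projection, whereas the paper works directly in the affine setting and uses the Pythagorean identity to show that $P_L(P_M(x))$ minimizes the distance from $x$ to $L$.
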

\begin{proof}
Consider an arbitrary point $x\in \R^n$. Observe that for any point $y\in L$, we have \[x-P_M(x)\in N_{P_M(x)}M \quad\textrm{ and }\quad y-P_M(x)\in T_{P_M(x)}M.\] Consequently any two such vectors are orthogonal and we obtain
\begin{align*}
|x-y|^2=|x-P_M(x)|^2+|P_M(x)-y|^2&\geq |x-P_M(x)|^2+|P_M(x)-P_L(P_M(x))|^2\\
&=|x-P_L(P_M(x))|^2.
\end{align*}
Since $y\in L$ is arbitrary, we deduce $P_L(x) = P_L \circ P_M(x)$, as claimed.
\end{proof}

We denote the {\em affine hull} of any convex set $Q\subset\R^n$ by $\aff Q$. 
\begin{proposition}[Polyhedral stratifications are normally flat] \label{P:polflat}
Consider a stratification of a set $Q\subset\R^n$, where each stratum is an open polyhedron. Then the stratification is normally flat. 
\end{proposition}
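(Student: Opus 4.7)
The plan is to reduce the claim to Lemma~\ref{lem:aff} on ordered affine subspaces, exploiting the fact that a relatively open polyhedron admits a neighborhood on which the metric projection coincides with the orthogonal projection onto its affine hull. The needed inclusion of affine hulls comes essentially for free: since $L\subset\cl M\subset\aff M$ and $\aff L$ is the smallest affine set containing $L$, one obtains $\aff L\subset\aff M$ immediately.

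Next I would select the neighborhoods on which the polyhedral and affine projections agree. Because $L$ is open in $\aff L$ and $P_{\aff L}$ is continuous, there exists an open neighborhood $V_0$ of $L$ in $\R^n$ on which $P_{\aff L}(x)\in L$ throughout; the inclusion $L\subset\aff L$ then forces $P_L(x)=P_{\aff L}(x)$ on all of $V_0$, since the orthogonal projection onto $\aff L$ already lands in $L$ and is therefore a fortiori the nearest point in $L$. The same argument applied to $M$ yields an open neighborhood $U$ of $M$ on which $P_M$ is single-valued and agrees with $P_{\aff M}$. Finally, I would shrink $V_0$ to a smaller open neighborhood $V$ of $L$ so that $P_M(V\cap U)\subset V_0$. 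A short estimate suffices for this: for $x\in V$ at distance less than $\eta$ from $L$, pick $\ell\in L$ with $|x-\ell|<\eta$; since $\ell\in\cl M$ one has $d(x,M)\le|x-\ell|<\eta$, hence $|P_M(x)-\ell|<2\eta$. Choosing $V$ so that the $2\eta$-neighborhood of $L$ sits inside $V_0$ accomplishes the task.

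With these neighborhoods in hand, the conclusion is essentially a one-line computation. Lemma~\ref{lem:aff} applied to $\aff L\subset\aff M$ gives $P_{\aff L}(x)=P_{\aff L}(P_{\aff M}(x))$ for every $x\in\R^n$. Restricting to $x\in V\cap U$, the substitution $P_L(x)=P_{\aff L}(x)$ handles the left-hand-side, while $P_{\aff M}(x)=P_M(x)$ together with $P_L(P_M(x))=P_{\aff L}(P_M(x))$ (valid because $P_M(x)\in V_0$) handles the right-hand-side, delivering the normal flatness identity $P_L(x)=P_L(P_M(x))$. The only mildly delicate step I anticipate is the neighborhood bookkeeping in the previous paragraph; conceptually the proposition is a direct lift of the ordered affine subspaces lemma to the polyhedral setting, with relative openness of the strata supplying the bridge.
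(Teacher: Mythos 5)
Your proof is correct and follows essentially the same route as the paper's: both reduce to Lemma~\ref{lem:aff} via the observations that $\aff L\subset\aff M$ and that near a relatively open polyhedron the metric projection agrees with the orthogonal projection onto its affine hull. The only difference is that you explicitly supply the neighborhood bookkeeping ensuring $P_M(V\cap U)$ lands back in the region where $P_L=P_{\aff L}$, a step the paper's terse conclusion leaves implicit.
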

\begin{proof}
Consider strata $L$ and $M$, with $L\prec M$. Clearly there exists a neighborhood $V$ of $M$ and $U$ of $L$ satisfying 
\[P_L=P_{\scriptsize{\aff L}} \textrm{ on }V \quad\textrm{ and } \quad P_M=P_{\scriptsize{\aff M}} \textrm{ on } U.\] Furthermore observe that the inclusion, $\aff L\subset \aff (\cl M)=\aff M$, holds. The result is now immediate from Lemma~\ref{lem:aff}.
\end{proof}

It turns out that normally flat stratifications satisfy the so-called {\em Transfer Principle}, which we will describe below. This realization will show that many common subsets of matrices admit normally flat stratifications. We first introduce some notation.

\begin{itemize}[leftmargin=0.75cm]
\item $\mathbf M^{n\times m}$ is the Euclidean space of all $n\times m$ real matrices, endowed with the trace inner product $\langle X,Y\rangle:=\tr(X^\top Y)$ and Frobenius norm $\|X\|_F:=\langle X^\top X\rangle^{1/2}$. For notational convenience, throughout we will assume $n\leq m$.
\item $\mathbf S^n$ is the subspace (when $m=n$) of all symmetric matrices, endowed with the trace inner product and the Frobenius norm.
\item $\R^n_\ge$ is the set $\{(x_1,\ldots,x_n)\in\R^n : x_1\ge x_2 \ge \dots \ge x_n\}$, and $\R^n_{+,\ge} := \R^n_\ge \cap \R^n_+$.
\item $\lambda:\mathbf S^n\to \R^n$ is the map taking $X$ to its vector $(\lambda_1(X),\ldots,\lambda_n(X))$ of (real) eigenvalues, in decreasing order.
\item $\sigma:\mathbf M^{n\times m}\to \R^n$ is the map taking $X$ to its vector $(\sigma_1(X),\ldots,\sigma_n(X))$ of singular values, in decreasing order. (Recall throughout we are assuming $n\leq m$.)
\item $\Diag x\in{\bf M}^{n\times m}$, for a vector $x\in\R^n$, is a matrix that has entries all zero, except for its principle diagonal, which contains the entries of $x$.
\end{itemize}

For notational convenience, sets $Q\subset\R^n$ that are invariant under any permutation of coordinates will be called {\em permutation-invariant}, while sets that are invariant under any coordinate-wise change of sign and permutation of coordinates will be called {\em absolutely permutation-invariant}.

Let $O(n)$ be the group of $n\times n$ orthogonal matrices. We can define an action of $O(n)$ on the space of symmetric matrices $\mathbf S^n$ by declaring
\[U.X=U X U^\top \textrm{ for all } U\in O(n) \textrm{ and } X \textrm{ in }\mathbf S^n.\] We say that a subset of $\mathbf S^n$ is {\em spectral} if it is invariant under the action of $O(n)$. Equivalently, a subset of $\mathbf S^n$ is spectral if and only if it can be represented as  
$\lambda^{-1}(Q)$, for some permutation-invariant set $Q\subset\R^n$. Due to this invariance of $Q$, the spectral set can be written simply as the union of orbits
\[\lambda^{-1}(Q)=\bigcup_{x\in Q} O(n).(\Diag x)\]

Similarly, we can consider the Cartesian product $O(n)\times O(m)$, which we denote by  $O(n,m)$, and its action on the space $\mathbf M^{n\times m}$ defined by 
\[(U,V).X=U X V^\top \textrm{ for all } (U,V)\in O(n,m) \textrm{ and } X \textrm{ in }\mathbf M^{n\times m}.\] We say that subset of $\mathbf M^{n\times m}$ is {\em spectral} if it is invariant under the action of $O(n,m)$. Equivalently, a subset of $\mathbf M^{n\times m}$ is spectral if and only if it has the form  
$\sigma^{-1}(Q)$,
for some absolutely permutation-invariant set $Q\subset\R^n$. In this situation, the spectral set is simply the union of orbits,
\[Q=\bigcup_{\scriptsize{x\in Q}} O(n,m).(\Diag x)\]

The mappings $\sigma$ and $\lambda$ have nice
geometric properties, but are very badly behaved as far as, for example, differentiability is concerned. However such difficulties are alleviated by the invariance assumptions on $Q$. The {\em Transfer Principle} asserts that many geometric (or more generally variational analytic) properties of the sets $Q$ are inherited by the spectral sets $\sigma^{-1}(Q)$ and $\lambda^{-1}(Q)$. The collection of properties known to satisfy this principle is impressive: convexity \cite{lag}, prox-regularity \cite{prox}, Clarke-regularity \cite{send,lag}, 
smoothness~\cite{lag,der,high_order,man,man2, spec_id} and partial smoothness~\cite{spec_id}. In this section, we will add the existence of normally flat stratifications to this list.

The following crucial lemma asserts that projections interact well with the singular value map (or eigenvalue map in the symmetric case). This result may be found in \cite[Theorem A.1, Theorem A.2]{alt_proj}, and in \cite[Proposition 2.3]{prox} for the symmetric case. For completeness, we record a short proof, adapted from \cite[Proposition 2.3]{prox}.

\begin{lemma}[Projection onto spectral sets] \label{L:PA1sp} {\hfill \\}
Let $Q\subset\R^n$ be an absolutely permutation-invariant set and consider a matrix $X\in\mathbf M^{n\times m}$ with singular value decomposition $X=U\Diag \sigma(X)~V^\top$. Then the inclusion 
\begin{equation}\label{eq:proj}
P_{\scriptsize{\sigma^{-1}(Q)}}(X) \supset \{U\Diag z~V^\top: z\in P_{Q}(\sigma(X))\} \quad\textrm{ holds}.
\end{equation}
Similarly if $Q\subset\R^n$ is only permutation-invariant and $X\in \mathbf S^n$ has an eigenvalue decomposition $X=U\Diag \lambda(X)~U^\top$, then the analogous inclusion 
\[
P_{\lambda^{-1}(Q)}(X) \supset \{U\Diag z~U^\top: z\in P_{Q}(\lambda(X))\} \quad\textrm{ holds}.
\]
\end{lemma}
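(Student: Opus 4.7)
The plan is to combine two ingredients: the invariance of $Q$ under signed permutations (so that $U\Diag z\,V^\top$ really does land in $\sigma^{-1}(Q)$ for any $z\in P_Q(\sigma(X))$), together with the classical von Neumann trace inequality
\[
\langle A,B\rangle \le \langle \sigma(A),\sigma(B)\rangle \qquad \textrm{for all } A,B\in\mathbf M^{n\times m},
\]
which controls the cross term when expanding a squared Frobenius distance. These two facts together collapse the matrix projection problem to the vector projection problem on $Q$.

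\textbf{Main steps (singular value case).} Fix $z\in P_Q(\sigma(X))$ and set $Y:=U\Diag z\, V^\top$. First I verify $Y\in\sigma^{-1}(Q)$: the singular values of $Y$ are the entries of $|z|$ rearranged in decreasing order, so $\sigma(Y)$ differs from $z$ only by a signed permutation, and by absolute permutation-invariance $\sigma(Y)\in Q$. Next, for arbitrary $W\in\sigma^{-1}(Q)$, I expand
\[
|X-W|^2 = |\sigma(X)|^2 - 2\langle X,W\rangle + |\sigma(W)|^2,
\]
using $|X|^2=|\sigma(X)|^2$ and $|W|^2=|\sigma(W)|^2$. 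Applying the von Neumann inequality to $\langle X,W\rangle$ yields
\[
|X-W|^2 \ge |\sigma(X)-\sigma(W)|^2.
\]
Since $\sigma(W)\in Q$ and $z\in P_Q(\sigma(X))$, the right-hand side is at least $|\sigma(X)-z|^2$. Finally, because $X-Y = U(\Diag\sigma(X)-\Diag z)V^\top$ and the Frobenius norm is orthogonally invariant, we get $|X-Y|^2 = |\sigma(X)-z|^2$. Chaining the inequalities gives $|X-Y|\le |X-W|$, proving $Y\in P_{\sigma^{-1}(Q)}(X)$.

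\textbf{Symmetric case.} The argument is identical after replacing $\sigma$ by $\lambda$, $U\Diag(\cdot)V^\top$ by $U\Diag(\cdot)U^\top$, absolute permutation-invariance by ordinary permutation-invariance, and the von Neumann inequality by the Fan inequality $\langle A,B\rangle\le\langle\lambda(A),\lambda(B)\rangle$ for symmetric matrices. The invariance of the Frobenius norm under orthogonal conjugation again gives $|X-Y| = |\lambda(X)-z|$.

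\textbf{Expected obstacle.} There is no genuine obstacle if one is willing to cite the von Neumann/Fan trace inequality; the only thing to watch is the bookkeeping around the fact that $z\in P_Q(\sigma(X))$ need not itself be in decreasing order, so $z\ne \sigma(Y)$ in general. This is precisely where absolute permutation-invariance of $Q$ is used, and it is also why the conclusion is only an inclusion $\supset$ rather than equality: different choices of SVD $U,V$ and different choices of $z\in P_Q(\sigma(X))$ can produce different matrices in $P_{\sigma^{-1}(Q)}(X)$, but the argument above produces at least these.
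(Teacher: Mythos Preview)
Your proof is correct and follows essentially the same approach as the paper. The paper cites the non-expansiveness of the singular-value map $\sigma$ (i.e., $\|X-W\|\ge|\sigma(X)-\sigma(W)|$) as a known fact from Horn--Johnson, whereas you derive that same inequality by expanding the squared norm and invoking the von Neumann trace inequality; since the von Neumann inequality is precisely how non-expansiveness is usually proved, the two arguments are the same in content, with yours slightly more self-contained.
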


\begin{proof}
The proof relies on the fact that the singular-value map $\sigma$ is non-expansive, see Theorem~7.4.51 in~\cite{Horn/Johnson:1985}. Then for arbitrary $Y\in Q$ and $z\in P_{Q}(\sigma(X))$, we obtain
\[
\|X-Y\|_F \ge |\sigma(X)-\sigma(Y)| \ge |\sigma(X) - z|,
\]
where the second inequality follows from the equivalence $\sigma(Y)\in Q\Longleftrightarrow Y\in \sigma^{-1}(Q)$. On the other hand, the matrix $Y=U\Diag z~V^\top$ achieves equality and lies in $\sigma^{-1}(Q)$, and this yields the claim. The symmetric case follows in the same way since the eigenvalue map $\lambda$ is also non-expansive.
\end{proof}

\begin{remark}{\rm
Lemma~\ref{L:PA1sp} shows that in the particular case when $P_{\scriptsize{\sigma^{-1}(Q)}}(X)$ is a singleton, so is $P_{Q}(\sigma(X))$, and we have equality in 
(\ref{eq:proj}). We furthermore have the appealing formula
\[
\sigma \circ P_{\sigma^{-1}( Q)} = P_{ Q}\circ \sigma \qquad (\textrm{or } \lambda\circ P_{\lambda^{-1}( Q)} = P_{ Q}\circ\lambda),
\]
which captures the gist of the result.}
\end{remark}

The proof of the following simple lemma may be found in \cite[Lemmas A.1, A.2]{alt_proj}, though the statement of the result is somewhat different.

\begin{lemma}[Permutations of projected points]\label{lem:permute_proj}
Consider an absolutely permutation invariant set $Q\subset\R^n$ and a point $x\in\R^{n}_{+,\geq}$. Then for any point $y$ lying in $P_Q(x)$, there exists a signed permutation matrix $A$ on $\R^n$ so that $Ay$ lies in $\R^{n}_{+,\geq}\cap P_Q(x)$. The analogous statement holds in the symmetric case.
\end{lemma}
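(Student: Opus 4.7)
The plan is to reduce the statement to the classical rearrangement inequality, exploiting the absolute permutation-invariance of $Q$ to ensure that signed permutations of $y$ remain in $Q$.

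First I would note that, by absolute permutation-invariance, for any signed permutation matrix $A$ on $\R^n$ the set $Q$ satisfies $AQ=Q$; consequently, the map $y\mapsto Ay$ sends $Q$ to itself. Thus it suffices to produce a particular signed permutation $A$ such that $Ay\in \R^n_{+,\ge}$ and $|x-Ay|\le |x-y|$, for then $Ay\in Q$ together with $y\in P_Q(x)$ will immediately give $Ay\in P_Q(x)$.

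Next I would choose $A$ to be the signed permutation that maps $y$ to the decreasing rearrangement $(|y_1|,\ldots,|y_n|)^{\downarrow}$ of the absolute values of its coordinates; by construction, $Ay\in \R^n_{+,\ge}$ and $|Ay|=|y|$. The key inequality to verify is
\[
\langle x, Ay\rangle \ge \langle x, y\rangle.
\]
This follows in two steps: since $x_i\ge 0$ for all $i$, we have $\langle x,y\rangle \le \sum_i x_i|y_i|$; and since both $x$ and the coordinates of $Ay$ are arranged in decreasing order, the classical rearrangement inequality yields $\sum_i x_i|y_i|\le \langle x, Ay\rangle$. Expanding $|x-y|^2$ and $|x-Ay|^2$ and using $|Ay|=|y|$ converts this into $|x-Ay|\le |x-y|$, finishing the main case.

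For the symmetric case, $x\in \R^n_{\ge}$ and $Q$ is only permutation-invariant, so $A$ must be an ordinary permutation. One chooses $A$ so that $Ay$ is the decreasing rearrangement of $y$; then $Ay\in \R^n_{\ge}$, and the rearrangement inequality alone (no sign-flip needed, since we no longer have the sign-invariance to exploit) gives $\langle x, Ay\rangle\ge \langle x,y\rangle$, from which $|x-Ay|\le |x-y|$ follows as before. I do not anticipate any real obstacle here beyond the bookkeeping of confirming that our constructed $A$ is indeed a (signed) permutation matrix, which is immediate from the description.
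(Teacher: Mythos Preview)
Your argument is correct. The paper does not actually give a proof of this lemma---it only cites \cite[Lemmas A.1, A.2]{alt_proj}---so there is no ``paper's own proof'' to compare against in detail. That said, your reduction to the rearrangement inequality is the standard approach and is exactly in the spirit of the cited reference: absolute permutation-invariance of $Q$ guarantees $Ay\in Q$, orthogonality of $A$ gives $|Ay|=|y|$, and the two-step chain $\langle x,y\rangle\le\sum_i x_i|y_i|\le\langle x,Ay\rangle$ (the first from $x_i\ge 0$, the second from Hardy--Littlewood--P\'olya rearrangement) yields $|x-Ay|\le|x-y|$, forcing equality and hence $Ay\in P_Q(x)$. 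The symmetric case is handled identically with an ordinary permutation. Nothing is missing.
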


\begin{theorem}[Lifts of stratifications] \label{P:A1sp}
Consider a partition $\mathcal A$ of a set $Q\subset \R^n$ into finitely many $C^p$ manifolds that are absolutely permutation-invariant. 
Then if $\mathcal A$ is a $C^p$-stratification of $Q$, the family
\[
\sigma^{-1} (\mathcal A) := \left\{\sigma^{-1}(M) : M \in \mathcal A\right\},
\]
is a $C^p$-stratification of $\sigma^{-1}(Q)$.
The analogous statement holds for both Whitney~(a)-regular and normally flat $C^p$-stratifications. The case of symmetric matrices is analogous as well.
\end{theorem}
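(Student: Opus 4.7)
The plan is to verify, for the spectral lift $\sigma^{-1}(\mathcal A)$, each of the three required properties---the partition together with the frontier condition, the Whitney condition (a), and normal flatness---by reducing each to the corresponding property of $\mathcal A$ via the SVD and the projection formulas encoded in Lemma~\ref{L:PA1sp}.

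First I would record the following auxiliary fact: for any absolutely permutation-invariant subset $N \subset \R^n$, the identity $\cl \sigma^{-1}(N) = \sigma^{-1}(\cl N)$ holds. The inclusion $\subset$ is immediate from continuity of $\sigma$; for the reverse, given $X \in \sigma^{-1}(\cl N)$ with an SVD $X = A\,\Diag \sigma(X)\,B^\top$ and a sequence $x_k \in N$ with $x_k \to \sigma(X)$, the matrices $A\,\Diag x_k\,B^\top$ lie in $\sigma^{-1}(N)$ by absolute permutation-invariance of $N$ and converge to $X$. From this, the partition property and the frontier condition for $\sigma^{-1}(\mathcal A)$ follow by taking $\sigma$-preimages of the analogous properties of $\mathcal A$.

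The core of the proof is the normal flatness case. Fix strata $L, M \in \mathcal A$ with $L \subset (\cl M)\setminus M$, and neighborhoods $V$ of $L$, $U$ of $M$---taken absolutely permutation-invariant by passing to a union of signed permutations---on which $P_L = P_L \circ P_M$ holds. Setting $V' := \sigma^{-1}(V)$ and $U' := \sigma^{-1}(U)$, I would establish both inclusions of $P_{\sigma^{-1}(L)}(X) = P_{\sigma^{-1}(L)} \circ P_{\sigma^{-1}(M)}(X)$ for $X \in V' \cap U'$. The chief tools are the distance identity $d(X, \sigma^{-1}(N)) = d(\sigma(X), N)$ and the spectral formula $\sigma \circ P_{\sigma^{-1}(N)} = P_N \circ \sigma$ from the remark after Lemma~\ref{L:PA1sp}; the equality case of von Neumann's inequality, which shows that every $W \in P_{\sigma^{-1}(N)}(X)$ admits a simultaneous SVD with $X$; and Lemma~\ref{lem:permute_proj}, which arranges base projections to lie in $\R^n_{+,\geq}$. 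For the inclusion $\subset$, any $Z \in P_{\sigma^{-1}(L)}(X)$ shares an SVD with $X$, say $X = A\,\Diag \sigma(X)\,B^\top$, $Z = A\,\Diag \sigma(Z)\,B^\top$, with $\sigma(Z) \in P_L(\sigma(X))$; base normal flatness produces $z^* \in P_M(\sigma(X)) \cap \R^n_{+,\geq}$ with $\sigma(Z) \in P_L(z^*)$, and then $Y := A\,\Diag z^*\,B^\top$ lies in $P_{\sigma^{-1}(M)}(X)$ and satisfies $Z \in P_{\sigma^{-1}(L)}(Y)$ by a direct distance computation using the same $(A, B)$.

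The inclusion $\supset$ is the delicate step, which I expect to be the main obstacle. Given $Z \in P_{\sigma^{-1}(L)}(Y)$ for some $Y \in P_{\sigma^{-1}(M)}(X)$, base normal flatness yields $\sigma(Z) \in P_L(\sigma(X))$, but to conclude $Z \in P_{\sigma^{-1}(L)}(X)$ one must verify $\|X - Z\| = |\sigma(X) - \sigma(Z)|$, i.e.\ that $X$ and $Z$ share an SVD. The difficulty is that the pairs $(X, Y)$ and $(Y, Z)$ each admit a simultaneous SVD, but these two SVDs of $Y$ may differ by a non-trivial element of the stabilizer of $\Diag \sigma(Y)$ when $\sigma(Y)$ has repeated entries. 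I would resolve this by exploiting the hypothesis that $\sigma^{-1}(M)$ is a ${\bf C}^p$-manifold, which forces the multiplicity pattern of $\sigma(\cdot)$ to be locally constant on $\sigma^{-1}(M)$ and hence constrains the ambiguity to a subgroup that also preserves $\Diag \sigma(Z)$, allowing the two SVDs of $Y$ to be aligned. The Whitney condition (a) then transfers by an analogous but simpler SVD-based argument, using the descriptions of normal spaces to spectral manifolds available in the cited spectral geometry literature (e.g.\ \cite{lag,der,high_order,man}), which express $N_X \sigma^{-1}(M)$ in terms of $N_{\sigma(X)} M$ and the orbit structure, so that upper semicontinuity of the base normal cones transfers to the lift. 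The symmetric case is handled identically, with the eigenvalue map $\lambda$, the symmetric analog of Lemma~\ref{L:PA1sp}, and the conjugation action of $O(n)$ replacing the $O(n,m)$-action.
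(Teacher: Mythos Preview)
Your treatment of the frontier condition and the Whitney condition matches the paper's: you prove $\cl\sigma^{-1}(N)=\sigma^{-1}(\cl N)$ by the same SVD construction, and for Whitney~(a) you defer to the known normal-space formula, which the paper quotes explicitly as $N_X\sigma^{-1}(M)=\{U(\Diag N_{\sigma(X)}M)V^\top : U\Diag\sigma(X)V^\top=X\}$ from \cite[Theorem~7.1]{send}.

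For normal flatness, however, you take a harder road than the paper and hit a genuine gap. The paper never works with set-valued projections: since $\sigma^{-1}(L)$ and $\sigma^{-1}(M)$ are ${\bf C}^p$ manifolds by hypothesis, it simply \emph{shrinks} the lifted neighborhoods $\widetilde V=\sigma^{-1}(V)$ and $\widetilde U=\sigma^{-1}(U)$ so that $P_{\sigma^{-1}(L)}$, $P_{\sigma^{-1}(M)}$, and their composition are all single-valued on $\widetilde V\cap\widetilde U$. Once that is done, the remark following Lemma~\ref{L:PA1sp} upgrades the inclusion there to an equality, and a four-line chain (with Lemma~\ref{lem:permute_proj} keeping the intermediate vector in $\R^n_{+,\geq}$) gives $P_{\sigma^{-1}(L)}\circ P_{\sigma^{-1}(M)}(X)=P_{\sigma^{-1}(L)}(X)$ outright. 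Your ``$\supset$'' difficulty never arises.

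Your proposed resolution of that difficulty is moreover incorrect: the claim that smoothness of $\sigma^{-1}(M)$ forces the multiplicity pattern of $\sigma$ to be locally constant on $\sigma^{-1}(M)$ is false. Take $M=\R^n\setminus\{0\}$, an absolutely permutation-invariant ${\bf C}^\infty$ manifold whose lift $\sigma^{-1}(M)={\bf M}^{n\times m}\setminus\{0\}$ is also a manifold; the multiplicity pattern of $\sigma$ is plainly not locally constant there. So the stabilizer of $\Diag\sigma(Y)$ need not preserve $\Diag\sigma(Z)$, and the two SVDs of $Y$ cannot in general be aligned by your argument. The fix is exactly the paper's: shrink to the single-valued regime and avoid the issue entirely.
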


\begin{proof}
First, we note that by \cite[Theorem~5.1]{spec_id}, each set $\sigma^{-1}(M)$ for $M\in \mathcal{A}$ is a $C^p$ manifold.
Throughout the proof, we let $L$ and $M$ be arbitrary strata in $\mathcal A$. We first claim that the implication 
\begin{equation}\label{eqn:imp}
L\subset \cl M\Longrightarrow \sigma^{-1}(L)\subset\cl \sigma^{-1}(M),
\end{equation}
holds. Indeed consider a matrix $X\in\sigma^{-1}(L)$ and a singular value decomposition $X=U \Diag \sigma(X)~V^T$. Then there exists a sequence $x_i\to \sigma(X)$ in $M$. Observe that since $M$ is absolutely permutation-invariant, the matrices $X_i:=U\Diag x_i~V^T$ lie in $\sigma^{-1}(M)$ and converge to $X$. This establishes the validity of the implication.

Assume now that $\mathcal A$ is a $C^p$-stratification of $Q$. Clearly $\sigma^{-1}(\mathcal A)$ is a partition of $\sigma^{-1}(Q)$. Now suppose $\sigma^{-1}(L)\cap \cl \sigma^{-1}(M)\neq\emptyset$. Then by continuity of $\sigma$, we have $L\cap \cl M\neq\emptyset$. We deduce that the inclusion $L\subset \cl M$ holds. Then using $(\ref{eqn:imp})$, we obtain $\sigma^{-1}(L)\subset\cl \sigma^{-1}(M)$, thus verifying that $\sigma^{-1} (\mathcal A)$ is a $C^p$-stratification of 
$\sigma^{-1}(Q)$.

Assume now that in addition, $\mathcal A$ is a Whitney (a)-regular $C^p$-stratification of $Q$. Consider a point $X\in M$. Then by \cite[Theorem 7.1]{send}, we have the formula
\[N_{\scriptsize{X}} (\sigma^{-1}(M))=\{U(\Diag N_{\sigma(X)}M) V^\top: U\Diag \sigma(X) V^\top=X\}.\]
Verification of the fact that $\sigma^{-1} (\mathcal A)$ is a Whitney (a)-regular $C^p$-stratification of 
$\sigma^{-1}(Q)$ is now trivial.

Assume now that $\mathcal A$ is a normally flat $C^p$-stratification. We will show that $\sigma^{-1}(\mathcal A)$ is a normally flat $C^p$-stratification of $\sigma^{-1}(Q)$.
To this end, suppose that the inclusion $\sigma^{-1}(L)\subset\cl \sigma^{-1}(M)$ holds. Then there exist neighborhoods $V$ of $L$ and $U$ of $M$, with $P_{L}=P_{L}\circ P_{M}$ on $V\cap U$. Define the neighborhoods
\[
\widetilde{U} = \sigma^{-1}(U) \qquad \textrm{and} \qquad \widetilde{V} = \sigma^{-1}(V),
\]
of $M$ and $L$, respectively. Shrinking $\widetilde{U}$ and $\widetilde{V}$, we may suppose that the maps $P_L$, $P_M$, and the composition $P_L\circ P_M$ are all well-defined and single-valued on $\widetilde{V}\cap \widetilde{U}$. Fix a matrix $X\in \widetilde{U}\cap \widetilde{V}$, with singular value decomposition $X=U\Diag\sigma(X)~V^\top$. Applying Lemma~\ref{L:PA1sp} and Lemma~\ref{lem:permute_proj} successively, we deduce
\begin{eqnarray*}
P_{\sigma^{-1}(L)}\circ P_{\sigma^{-1}(M)}(X) &=&  P_{\sigma^{-1}(L)}( U \Diag P_{M}(\sigma(X))~V^\top ) \\
&=& U \Diag [P_{L}\circ P_{M}(\sigma(X))]~V^\top \\
&=& U \Diag P_{L}(\sigma(X))~V^\top \\
&=& P_{\sigma^{-1}(L)}(X),
\end{eqnarray*}
as claimed. The proof of the proposition in the case of symmetric matrices is similar. We leave the details to the reader.
\end{proof}

\section{Application: Density in Sobolev spaces}\label{app:sob}
In this section, we will always consider integration on $\R^n$ with respect to the Lebesgue measure. To make notation consistent with existing literature, in contrast to previous sections, we will let $p$ denote the order of Lebesgue spaces and we will let $k$ denote the degree of smoothness of a function. Let $w\colon\R^n\to\R$ be a locally integrable function satisfying $w(x) >0$ for almost all $x\in\R^n$; we will call $w$ a {\em weight}. Consider an open subset $\Omega$ of $\R^n$. Then for $1 \leq p <\infty$, define $L^p_w(\Omega)$ as the set of measurable functions $f$ on $\Omega$ satisfying 
$$\|f\|_{L^p_w(\Omega)}:=\Big(\int_{\Omega} |f|^p w \,dx\Big)^{\frac{1}{p}}<\infty.$$
Pairwise identifying functions in $L^p_w(\Omega)$ that are pairwise equal almost everywhere with respect to Lebesgue measure, the set $L^p_w(\Omega)$ becomes a Banach space with norm $\|\cdot\|_{L^p_w(\Omega)}$.

Consider a locally integrable function $f$ on $\Omega$. Then the {\em weak i'th partial derivative} of $f$ (for $i=1,\ldots,n$) is any other locally integrable function $v_i$ on $\Omega$ satisfying 
$$\int_{\Omega} f\frac{\partial \phi}{\partial x_i} \,dx=-\int_{\Omega}v_i\phi \,dx,\quad \textrm{ for any }\phi\in C_c^{\infty}(\Omega).$$
Weak derivatives are unique up to measure zero. We will denote the weak $i$'th  partial derivative of $f$ by $D^i f$. The vector of weak partial derivatives $(D^1 f,\ldots,D^n f)$ will be denoted by $D f$.

We assume that $w^{-1/(p-1)}$ is locally integrable on $\Omega$, which ensures that every $f\in L^p_w(\Omega)$ has weak partial derivatives, see \cite[Theorem~1.5]{Kufner/Opic:1984}. The {\em weighted Sobolev space} $W^{1,p}_w(\Omega)$ is defined by 
\[
W^{1,p}_w(\Omega)=\{f\in L^p_w(\Omega): D^i f\in L^p_w(\Omega) \textrm{ for } i=1,\ldots, n\},
\]
and it becomes a Banach space when equipped with the norm
\[
\|f \|_{W^{1,p}_w(\Omega)} := \Big(\|f\|_{L^p_w(\Omega)}^p+\sum_{i=1}^n \|D^{i}f\|_{L^p_w(\Omega)}^p\Big)^{\frac{1}{p}},
\]
see \cite[Theorem~1.11]{Kufner/Opic:1984}.

A basic question in the theory of Sobolev spaces is under which conditions the elements of $W^{1,p}_w(\Omega)$ can be approximated by more regular functions. In this direction Meyers and Serrin \cite{Meyers/Serrin:1964} (see also~\cite[Theorem~	3.17]{Adams/Fournier:2003}) famously showed that the set of $C^1$-smooth functions contained in $W^{1,p}_1(\Omega)$ is actually dense in $W^{1,p}_1(\Omega)$. More generally, if $w$ satisfies {\em Muckenhoupt's $A_p$ condition}, see \cite[Definition 1.2.2]{Turesson:2000fk}, it is true that $C^{\infty}$-smooth functions contained in $W^{1,p}_w(\Omega)$ are dense in $W^{1,p}_w(\Omega)$ \cite[Corollary 2.1.6]{Turesson:2000fk}. Ensuring that the approximating functions can be extended to all of $\R^n$ in a smooth way requires extra conditions on the geometry of $\Omega$. In particular, the following property holds in a variety of circumstances:
\begin{equation}\label{eq:cinf}
\textrm{The set of restrictions to } \Omega \textrm{ of functions in } C^{\infty}_c(\R^n) \textrm{ is dense in } W^{1,p}_w(\Omega).
\end{equation}
For example, in the unweighed case where $w=1$, equation (\ref{eq:cinf}) holds whenever $\Omega$ satisfies the so-called \emph{segment condition}, essentially stating that $\Omega$ is located on one side of its boundary. For more details, see~\cite[Definition~3.21 and Theorem~3.22]{Adams/Fournier:2003}. In the weighted case and assuming that $w$ satisfies the $A_p$~condition, it is sufficient that $\Omega$ be an \emph{$(\varepsilon,\delta)$ domain}, see~\cite{Chua:1992}.

Let now $Q$ denote the closure of $\Omega$, and assume it admits a Whitney (a)-regular $C^2$-stratification $\mathcal A$. Assume also that there is a subclass $\mathcal A_{\rm bd}\subset\mathcal A$ of strata whose union is equal to $\bd Q$. In particular, this is the case whenever $\mathcal A$ is a semi-algebraic stratification. Then, given that (\ref{eq:cinf}) holds, we can apply our previous results to identify a different class of functions that is also dense in $W^{1,p}(\Omega,w)$:
\[
C^k_{\rm Neu}(\Omega) :=
\left\{
\begin{array}{l}
\textrm{all restrictions to } \Omega \textrm{ of functions } g \in C^k_c(\R^n) \textrm{ with} \\[1mm]
\nabla g(x) \in T_xM \textrm{ for all } M\in\mathcal A_{\rm bd} \textrm{ and all } x\in M  
\end{array}
\right\}.
\]
The elements of $C^k_{\rm Neu}(\Omega)$ satisfy a Neumann boundary condition, namely that its gradients are tangent to the boundary of $\Omega$. In what follows, the $p$-norm on $\R^n$ will be denoted by $\|\cdot\|_p$.

\begin{theorem}
Fix a real number $1\le p< \infty$ and let $Q$ be the closure of an open set $\Omega\subset\R^n$. Assume that $Q$ admits a Whitney (a)-regular $C^2$-stratification~$\mathcal A$ such that $\bd Q$ consists of a collection $\mathcal A_{\rm bd}\subset \mathcal A$ of strata. If (\ref{eq:cinf}) holds, then $C^1_{\rm Neu}(\Omega)$ is dense in $W^{1,p}_w(\Omega)$.

If in addition $\mathcal A$ is normally flat and a $C^{k+1}$-stratification, then $C^k_{\rm Neu}(\Omega)$ is dense in $W^{1,p}_w(\Omega)$.
\end{theorem}
\begin{proof}
We start with the first assertion. Since (\ref{eq:cinf}) holds, it suffices to approximate functions in $C_c^{\infty}(\R^n)$. Let $f$ be such a function and let $V_1$ be a bounded open set containing its support. Additionally let $V_2$ be an open set, to be specified later, containing $\bd Q$. Consider now an arbitrary real number $\varepsilon>0$. An application of Theorem~\ref{thm:main} yields a $C^1$ function $g\in C^1_\text{Neu}(\Omega)$ such that its support is contained in $V_1$, it coincides with $f$ outside of $V_2$, the inequality $|f(x)-g(x)|<\varepsilon$ holds for all $x\in\R^n$, and we have the estimate $\lip g\leq c_1(\lip f)$ for a constant $c_1$ that only depends on the stratification~$\mathcal A$. This yields
\[
\int_\Omega |g(x)-f(x)|^p w(x)dx  \le \varepsilon \int_{V_2}w(x)dx
\]
and, since $\|\cdot\|_p \le c_2 \|\cdot\|$ for some constant $c_2$ that only depends on $p$ and $n$,
\[
\begin{array}{ll}
\int_\Omega \|D g(x) - D f(x)\|_p^p w(x)dx &\le 2^{p-1} \int_{V_2\cap\Omega} \left(\|D g(x)\|^p_p + \|D f(x)\|_p^p\right) w(x)dx \\
&\le  2^{p-1} c_2^p\int_{V_2\cap\Omega} \left(\|D g(x)\|^p + \|D f(x)\|^p\right) w(x)dx \\
&\le 2^{p-1} c_2^p (c_1^p + 1) (\lip f)^p\int_{V_2}w(x)dx.
\end{array}
\]
Since $w$ is locally integrable, $w(x)dx$ is a Radon measure on $\R^n$, hence outer regular. Since also $\bd Q$ is the union of finitely many $C^2$~manifolds of dimension at most $n-1$, and hence a nullset, $V_2$ can be chosen so that we have $\bd Q\subset V_2$ and
\[
\int_{V_2}w(x)dx\le \min\{1,\  \varepsilon 2^{1-p}c_2^{-p}(c_1^p+1)^{-1}(\lip f)^{-p}\}.
\]
With this choice of $V_2$, the inequality $\|g-f\|^p_{W^{1,p}_w(\Omega)} \le 2\varepsilon$ holds. This finishes the proof of the first assertion. To prove the second assertion, we simply apply Theorem~\ref{T:smooth} instead of Theorem~\ref{thm:main} in the above proof. This ensures that $g$ is $C^k$-smooth.
\end{proof}

\begin{remark}
{\rm
Unfortunately our results are not strong enough to obtain analogous results for higher order Sobolev spaces $W^{k,p}_w(\Omega)$, where $k>1$. The reason is that the size of higher-order derivatives of $g$ cannot be controlled in terms of those of $f$ using our current techniques.}
\end{remark}

\section{Application: matrices with nonnegative determinant} \label{S:M+}
In this section, we outline how one may use Theorem~\ref{T:smooth} in a concrete situation. The full details may be found in \cite{Larsson:2012}. Before proceeding though, we give a rather simple and very intuitive motivation for why our main results may have significant applicability. Consider a $C^2$-smooth function $f\colon\R^n\to\R$ and a $C^1$-smooth function $h\colon\R^n\to\R$. As is well known, for many subsets $Q$ of $\R^n$, the integration-by-parts formula
\begin{equation}\label{eqn:int_by_parts}
\int_Q{h\Delta f} \,dx=\int_{\sbd Q} h\langle\nabla f,\hat{n}\rangle \,dS -\int_Q\langle\nabla h,\nabla f \rangle \, dx,
\end{equation}
holds, where $\hat{n}$ is a well-defined outward normal vector and $dS$ denotes an appropriate surface area measure. The boundary term is often the troublesome term in the expression above. Theorem~\ref{T:smooth}, on the other hand, implies that under suitable conditions on $Q$ we may approximate $f$ by another $C^2$-smooth function $g$, for which the corresponding boundary term vanishes. 

\begin{corollary}[Simplified integration-by-parts]\label{cor:int}{\hfill}{\\}
Consider a subset $Q\subset\R^n$, a $C^2$-smooth function $f\colon\R^n\to\R$, and a $C^1$-smooth function $h\colon\R^n\to\R$. Suppose in addition that the equation (\ref{eqn:int_by_parts}) holds and that~$Q$ admits a normally flat $C^{p+2}$-stratification. Then for any $\epsilon>0$, there exists a $C^{p+1}$-smooth function $g\colon\R^n\to\R$ with $|f-g|<\epsilon$, that satisfies the simplified integration-by-parts formula
\[
\int_Q{h\Delta g} \,dx= -\int_Q\langle\nabla h,\nabla g \rangle \, dx.
\]
If in addition (\ref{eq:cinf}) holds for some weight $w$, then we may ensure $\|f -g\|_{W^{1,q}_w(\Omega)}<\epsilon$ for any fixed $q\in [1,\infty)$.
\end{corollary}

The corollary above is the driving force behind the specific application we consider in this section, though we defer most of the details to \cite{Larsson:2012}. We now begin the development. To this end, consider the set
\[
\mathbf M^{n\times n}_+ := \left\{ X\in \mathbf M^{n\times n}: \det X \ge 0\right\},
\]
where $\mathbf M^{n\times n}$ is endowed with the trace product and the Frobenius norm. The reasons for studying this set are two-fold: firstly, it was one of the original motivations for developing the results in this paper. This is due to its role in a certain application involving stochastic processes, which we outline below. Secondly, and more importantly in the context of the present paper, it allows us to illustrate how the components of the iterative construction in Section~\ref{S:it} can be used more generally. Specifically, our goal is to establish an improved version of Theorem~\ref{T:smooth}, discussed below.

We first need a few preliminaries. We begin by observing that Propositions~\ref{P:polflat} and Theorem~\ref{P:A1sp} show that the collection of $C^{\infty}$ manifolds
\[
\quad M_k := \{X\in\mathbf M^{n\times n} : \rk X = k\} \quad\textrm{ for } k=0,\ldots,n,
\]
is a normally flat $C^{\infty}$-stratification of $\mathbf M^{n\times n}$. 
Consequently, the collection 
\[
\mathcal A_+ := \{M_0,\ldots,M_{n-1},M_{n+}\},
\]
where $M_{n+}$ is the open set \[M_{n+}:=\{X\in\mathbf M^{n\times n} : \det X>0\},\]
is a normally flat ${C^{\infty}}$-stratification of $\mathbf M^{n\times n}_+$.

For a point $x\in\R^n$, define a new point $x^+\in\R^n$ by setting $x^+_i=\frac{1}{x_i}$ if $x_i\ne 0$, and $x^+_i=0$ otherwise. 
Then if a matrix $X\in \mathbf M^{n\times n}$ has a decomposition $X=U\Diag x\ V^\top$ with $U,V\in O(n)$, the \emph{Moore-Penrose inverse} of $X$ is given by $X^+ := V \Diag x^+\ U^\top$. The transpose of the Moore-Penrose inverse, which we denote by
\[
X^{\mp} := (X^+)^\top = (X^\top)^+,
\]
will play a role in our development. The main result of this section is the following.

\begin{theorem}[Approximation on matrices with positive determinant] \label{T:pr11}
Consider the set of matrices $Q=\mathbf M^{n\times n}_+$, along with the $C^{\infty}$-stratification $\mathcal A_+$, and let $f$ and $\varepsilon$ be as in Theorem~\ref{T:smooth}. Then there is a function $g:\R^n\to \R$ that satisfies all the properties in Theorem~\ref{T:smooth}, as well as the property that the mapping
\[
X \mapsto \left\langle X^\mp, \nabla g(X) \right\rangle,
\]
is continuous.
\end{theorem}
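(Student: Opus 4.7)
The plan is to apply the iterative construction of Theorem~\ref{T:smooth} to the normally flat stratification $\mathcal A_+$ with no further modification, and to show that the resulting function $g$ automatically satisfies the extra continuity property. The argument rests on a direct computation exploiting the compatibility between the Moore--Penrose pseudoinverse and the normal geometry of the rank-$k$ strata~$M_k$.

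Recall from the proof of Theorem~\ref{T:smooth} that for every stratum $M_k\in\mathcal A_+$ the resulting $g$ has a neighborhood $W_k$ on which the identity $g=g\circ P_{M_k}$ holds. For the open stratum $M_{n+}$ this is vacuous, and on the open set $U_n=\{X:\rk X=n\}$ of full-rank matrices the map $X\mapsto X^\mp$ is smooth, so $\langle X^\mp,\nabla g(X)\rangle$ is trivially continuous there. Since the $W_k$ for $k<n$ together with $U_n$ cover $\mathbf M^{n\times n}$, it suffices to verify continuity on each $W_k$ with $k<n$.

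Fix such a $k$ and a point $X\in W_k$, set $Y=P_{M_k}(X)$, and choose an SVD $Y=U_1\Diag(s)V_1^\top$ together with orthonormal bases $U_2,V_2$ of the complementary subspaces in $\R^n$. Because $X-Y$ lies in $N_Y M_k$, we may write $X=Y+U_2 C V_2^\top$ for some $C\in\R^{(n-k)\times(n-k)}$, and a short SVD computation then yields
\[
X^\mp \;=\; Y^\mp + U_2 C^\mp V_2^\top,
\]
with the correction term lying in $N_Y M_k=\{U_2 B V_2^\top:B\in\R^{(n-k)\times(n-k)}\}$. The map $X\mapsto Y^\mp=P_{M_k}(X)^\mp$ is continuous on $W_k$, since $P_{M_k}$ is continuous and the pseudoinverse is smooth on the fixed-rank manifold $M_k$. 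Combining $g=g\circ P_{M_k}$ with the chain rule gives $\nabla g(X)=DP_{M_k}(X)^\top \nabla g(Y)$, and the fundamental tubular-neighborhood identity $DP_{M_k}(X)[\xi]=0$ for every $\xi\in N_Y M_k$---valid because $X+t\xi$ remains in the same normal fiber and hence projects to $Y$ for all sufficiently small $t$---shows that
\[
\langle X^\mp,\nabla g(X)\rangle \;=\; \langle P_{M_k}(X)^\mp,\nabla g(X)\rangle + \langle DP_{M_k}(X)[U_2 C^\mp V_2^\top],\nabla g(Y)\rangle \;=\; \langle P_{M_k}(X)^\mp,\nabla g(X)\rangle,
\]
a pairing of continuous maps, hence continuous on $W_k$.

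The main technical step is the displayed identity $X^\mp = Y^\mp + U_2 C^\mp V_2^\top$: it follows by SVD-diagonalizing $C$, assembling a full SVD of $X$ using the orthogonality of $U_2$ to $U_1$ and of $V_2$ to $V_1$, and reading off the pseudoinverse in block form. Everything else---continuity of $P_{M_k}$, the chain rule, and the vanishing of $DP_{M_k}$ on normal directions---is a routine consequence of the tubular-neighborhood machinery developed in Subsection~\ref{S:tn}. In particular, no further modification of the $g$ produced by Theorem~\ref{T:smooth} is required.
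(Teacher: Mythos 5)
Your proposal is correct, and it takes a somewhat different (and arguably cleaner) route from the paper. The paper organizes the argument inductively: it introduces Proposition~\ref{P:Mplus}, which is applied in each step of the iterative construction and verifies that the newly produced $g=\phi\, f\circ P_M+(1-\phi)f$ preserves continuity of $X\mapsto\langle\nabla g(X),X^\mp\rangle$ at $\Gamma\cup M$; this requires carrying along an inductive hypothesis (hypothesis~(iv) of Proposition~\ref{P:Mplus}) and invoking Lemma~\ref{L:MPinv}, which states $P_{T_YM}(X^\mp)=Y^\mp$ for $Y=P_M(X)$. Your approach instead takes the $g$ delivered by Theorem~\ref{T:smooth} as a finished product and verifies the extra continuity property in one stroke, using the boundary-behavior identity $g=g\circ P_{M_k}$ on a neighborhood $W_k$ of each stratum and the SVD decomposition $X^\mp=Y^\mp+U_2C^\mp V_2^\top$ with the correction in $N_YM_k$. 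That decomposition is exactly the content of Lemma~\ref{L:MPinv}, and your block-SVD derivation is essentially the same computation the paper carries out in that lemma's proof, so the key algebraic ingredient is shared. The place where your argument genuinely simplifies is the handling of points $\overline{X}$ in lower-dimensional strata: the paper must argue separately that near such points the extra terms in \eqref{eqn:stuff} vanish (via normal flatness) and then appeal to the inductive continuity hypothesis, whereas you treat all strata uniformly by invoking the neighborhood $W_k$ for the stratum $M_k$ actually containing $\overline{X}$ and the cover $\{W_0,\dots,W_{n-1},U_n\}$. One small point worth stating explicitly for rigor: the argument relies on $W_k$ being contained in a tubular neighborhood of $M_k$ so that $P_{M_k}$ is smooth and the normal-fiber identity $\mathrm{D}P_{M_k}(X)[\xi]=0$ for $\xi\in N_YM_k$ is available; this is indeed how the $W_k$ are built in the proof of Theorem~\ref{T:smooth}, but it should be said rather than assumed silently.
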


\begin{remark}{\rm
In the case $\det X>0$, that is when $X$ is invertible, we have $X^\mp=(X^{-1})^\top=\nabla \ln\det (X)$.}
\end{remark}

The importance of Theorem~\ref{T:pr11} comes from its significance in the study of a class of stochastic processes called the \emph{matrix-valued Bessel processes}, introduced in~\cite{Larsson:2012}. These are Markov processes whose infinitesimal generator arises as an extension of the differential operator
\[
\mathcal L = \frac{1}{2}\Delta + \frac{\delta-1}{2}\langle \nabla \ln\det(X), \nabla\rangle,
\]
acting on a suitable class of functions on $\mathbf M^{n\times n}_+$. Here $\delta>0$ is fixed. A consequence of Theorem~\ref{T:pr11} is that any compactly supported Lipschitz continuous function on $\mathbf M^{n\times n}_+$ can be approximated uniformly by some compactly supported $C^2$ function $g$ that satisfies a Neumann boundary condition, and has the crucial property that $\mathcal Lg$ is bounded (in fact, the boundedness of $\mathcal Lg$ is the incremental benefit that motivates Theorem~\ref{T:pr11}.)  For such functions $g$, it is possible to obtain an integration-by-parts formula,
\[
\int_Q h(X) \mathcal Lg(X) w(X)dX = - \frac{1}{2}\int_Q \langle \nabla h(X),\nabla g(X) \rangle w(X)dX,
\]
where $w(X)=(\det X)^{\delta-1}$ can be shown to be locally integrable on ${\bf M}^{n\times n}_+$, and~$h$ varies over some class of test functions. This is perhaps not so surprising in light of Corollary~\ref{cor:int}. The right-hand-side of the above equation extends to a \emph{Dirichlet form} $\mathcal E(g,h)$ on the weighted space $L^2_w(\mathbf M^{n\times n}_+)$. This allows one to apply standard existence results for the associated stochastic process, which we denote by ${\bf X}=({\bf X}_t:t\ge 0)$. Furthermore, for a function $g$ that satisfies the integration-by-parts formula, and for which $\mathcal Lg$ is bounded, the process
\[
g({\bf X}_t) - g({\bf X}_0) - \int_0^t \mathcal L g({\bf X}_s)ds, \qquad t\ge 0,
\]
is a martingale.  Since, by Theorem~\ref{T:pr11}, this holds for a large class of functions $g$, it is possible to obtain a good description of the probability law of $\bf X$. In particular, powerful uniqueness results become available. For a detailed discussion, see \cite{Larsson:2012}.

After this brief digression, let us proceed to the proof of Theorem~\ref{T:pr11}. The following lemma establishes a key property of the Moore-Penrose inverse.

\begin{lemma}[Projection of the Moore-Penrose inverse]\label{L:MPinv}
Consider a stratum $M\in\mathcal A_+$ and a matrix $X\in \mathbf M^{n\times n}$ that when projected onto $M$ yields a single matrix $Y$. Then the equality 
\[
P_{T_YM}(X^\mp) = Y^\mp \quad\textrm{holds}.
\]
\end{lemma}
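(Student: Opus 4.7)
The plan is to analyze the possibilities for the stratum $M$ separately and, in the substantive case, to perform an explicit computation using a singular value decomposition of $X$ that is compatible with $Y$.

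If $M = M_{n+}$, the stratum is open, so the hypothesis $P_M(X) = \{Y\}$ forces $Y = X$ and $T_Y M = \mathbf{M}^{n \times n}$; the identity $P_{T_Y M}(X^\mp) = X^\mp = Y^\mp$ is then immediate. Thus the substance lies in the case $M = M_k$ with $0 \le k \le n-1$. Observing that $M_k = \sigma^{-1}(Q_k)$ for the absolutely permutation-invariant model set $Q_k = \{z \in \R^n : z \text{ has exactly } k \text{ nonzero coordinates}\}$, I would fix an SVD $X = U \Diag \sigma(X)\, V^\top$. Lemma~\ref{L:PA1sp} together with the singleton hypothesis forces $P_{Q_k}(\sigma(X))$ to equal the singleton $\{y\}$ with $y = (\sigma_1(X), \ldots, \sigma_k(X), 0, \ldots, 0)$ (and in particular $\sigma_k(X) > \sigma_{k+1}(X)$ when $k \ge 1$), and identifies the projection as $Y = U \Diag y\, V^\top$. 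The subspaces $R := \rge Y = \mathrm{span}(u_1, \ldots, u_k)$ and $R' := \rge Y^\top = \mathrm{span}(v_1, \ldots, v_k)$ are thereby canonically determined by $Y$.

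Directly from the definition of the Moore--Penrose inverse,
\[
X^\mp - Y^\mp = \sum_{i=k+1}^n \sigma_i(X)^+\, u_i v_i^\top,
\]
which has column space contained in $\mathrm{span}(u_{k+1}, \ldots, u_n) = R^\perp$ and row space in $(R')^\perp$. Meanwhile $Y^\mp = \sum_{i=1}^k \sigma_i(X)^{-1}\, u_i v_i^\top$ has column space $R$ and row space $R'$. The standard block description of the tangent and normal spaces to the rank-$k$ manifold,
\[
T_Y M_k = \{Z \in \mathbf{M}^{n \times n} : P_{R^\perp}\, Z\, P_{(R')^\perp} = 0\}, \qquad N_Y M_k = \{P_{R^\perp}\, Z\, P_{(R')^\perp} : Z \in \mathbf{M}^{n \times n}\},
\]
obtainable either by the direct parametrization $Y = P Q^\top$ with $P, Q \in \mathbf{M}^{n \times k}$ or from the Lewis--Sendov formula \cite[Theorem~7.1]{send} for normals to spectral lifts invoked in the proof of Theorem~\ref{P:A1sp}, then shows $Y^\mp \in T_Y M_k$ and $X^\mp - Y^\mp \in N_Y M_k$. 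Orthogonal decomposition yields $P_{T_Y M_k}(X^\mp) = Y^\mp$, as required.

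The main obstacle is the non-uniqueness of the SVD: coinciding singular values beyond the $k$-th position, or rank-deficiency of $X$, leave the factors $U$ and $V$ non-unique. This is not a real obstruction, because the subspaces $R$ and $R'$ depend only on $Y$, making the orthogonal decomposition $\mathbf{M}^{n \times n} = T_Y M_k \oplus N_Y M_k$ intrinsic; any SVD of $X$ then serves merely as a convenient bookkeeping device for exhibiting $X^\mp - Y^\mp$ inside $N_Y M_k$.
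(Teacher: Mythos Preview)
Your proof is correct and follows essentially the same strategy as the paper's: both use a singular value decomposition of $X$ to write $X^\mp = Y^\mp + (X^\mp - Y^\mp)$ and then verify that this is the orthogonal decomposition along $T_YM \oplus N_YM$. The only difference is in the verification step---the paper checks tangency and normality directly via curve and projection arguments (showing $Y + tU\Sigma_k^+V^\top$ stays in $M_k$ and $P_M(Y + tU(\Sigma^+ - \Sigma_k^+)V^\top) = Y$), whereas you invoke the standard block description of $T_YM_k$ and $N_YM_k$; both routes are equally valid.
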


\begin{proof}
Let $k$ be the rank of the matrices in $M$, and note that since $P_M(X)$ is a singleton, the inequality $\rk X\ge k$ must hold.
Let $X=U\Sigma V^\top$ be a singular value decomposition of $X$, and let $\Sigma_k$ be the diagonal matrix obtained from $\Sigma$ by setting all but the $k$ largest entries to zero. Then we have the equality $Y=U\Sigma_k V^\top$, as is apparent from Lemma~\ref{L:PA1sp}. Observe,
\[
X^\mp = Y^\mp  +  \left(X^\mp - Y^\mp\right) = U \Sigma_k^+ V^\top + U\left( \Sigma^+ - \Sigma_k^+\right) V^\top.
\]
The desired result follows once we show that the right-hand-side is the decomposition of $X^\mp$ as a direct sum in $T_YM\oplus N_YM$. To do this, first note that for small~$t$, the matrix
\[
Y +  tU \Sigma_k^+ V^\top = U (\Sigma_k +  t \Sigma_k^+) V^\top,
\]
has rank $k$, and hence lies in $M$. Consequently the inclusion $U \Sigma_k^+ V^\top\in T_YM$ holds. Next, for small $t$ we have 
\[
P_M\left( Y + tU\left( \Sigma^+ - \Sigma_k^+\right) V^\top \right) = P_M\left( U\left( \Sigma_k + t(\Sigma^+ -\Sigma_k^+) \right) V^\top\right) = Y,
\]
thus verifying that the matrix $U\left( \Sigma^+ - \Sigma_k^+\right) V^\top$ lies in $N_YM$, as required.
\end{proof}

Next, we establish the validity of the following inductive step. We will use the well-known property that the Moore-Penrose inverse mapping $X\mapsto X^{+}$ is continuous on each stratum $M$ of~$\mathcal{A}_{+}$.

\begin{proposition} \label{P:Mplus}
Consider the setup and notation given in Subsection~\ref{S:it1}, with $\mathcal A=\mathcal A_+$. Assume that the following properties hold.
\begin{enumerate}
\item $U_\delta$ is contained in the set $\widehat U$ from Lemma~\ref{L:A1},
\item $f$ is $C^1$ smooth, and we have $\nabla f(X)\in T_X\Gamma$ for every $X\in\Gamma$,
\item each stratum $L\in\mathcal A_+$ with $L\subset\Gamma$ has a neighborhood $V$ such that
\[
f(X) = f\circ P_L(X) \quad \textrm{for all} \quad X\in V,
\]
\item the map $X \mapsto \langle \nabla f(X), X^\mp\rangle$ is continuous at every $\overline X\in \Gamma$.
\end{enumerate}
Define a function $g=\phi f\circ P_M + (1-\phi)f$.  Then the mapping
\[
X \mapsto \langle \nabla g(X), X^\mp\rangle,
\]
is continuous at every $\overline X\in \Gamma\cup M$.
\end{proposition}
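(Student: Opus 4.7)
\begin{proof*}[plan for Proposition~\ref{P:Mplus}]
The plan is to split according to whether $\overline X \in M$ or $\overline X \in \Gamma$. In each case the strategy is to reduce $\langle \nabla g(X),X^\mp\rangle$ on a neighborhood of $\overline X$ to a tractable expression, then take the limit. Lemma~\ref{L:MPinv} will do the heavy lifting.

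First I will handle $\overline X \in M$. Since $\phi\equiv 1$ on the inner core $\{|x-P_M(x)| \le \frac14 \delta\circ P_M(x)\}$ of $U_\delta$, which contains a whole neighborhood of $\overline X$, we have $g = f\circ P_M$ locally and therefore $\nabla g(X) = \mathrm D P_M(X)^*[\nabla f(Y)]$, where $Y := P_M(X)$. This gives
\[
\langle \nabla g(X), X^\mp\rangle = \langle \nabla f(Y), \mathrm D P_M(X)[X^\mp]\rangle.
\]
The obstacle is that $X^\mp$ can diverge as $X\to \overline X$ from a higher-rank stratum. To neutralise this I will establish the identity $\mathrm D P_M(X)[X^\mp] = \mathrm D P_M(X)[Y^\mp]$. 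The first step is to show that $\mathrm D P_M(X)$ annihilates $N_Y M$: for any $w\in N_Y M$, the curve $s\mapsto X+sw$ stays inside the normal fibre over $Y$ (because $X-Y\in N_Y M$), hence $P_M(X+sw)\equiv Y$ and the directional derivative vanishes. Combined with the decomposition $X^\mp = Y^\mp + (X^\mp - Y^\mp)$ supplied by Lemma~\ref{L:MPinv}, in which the second summand lies in $N_Y M$, this yields the identity. The limit is then straightforward: $Y\to \overline X$ along $M$, so $Y^\mp \to \overline X^\mp$ by continuity of the Moore--Penrose inverse on each fixed-rank stratum; $\mathrm D P_M(X)\to \mathrm D P_M(\overline X) = P_{T_{\overline X} M}$ and $\nabla f(Y)\to \nabla f(\overline X)$ by smoothness; and a further application of Lemma~\ref{L:MPinv} at $\overline X$ gives $P_{T_{\overline X} M}[\overline X^\mp] = \overline X^\mp$. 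An analogous direct calculation at $X=\overline X$ identifies the limit as $\langle \nabla g(\overline X),\overline X^\mp\rangle$.

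For $\overline X \in \Gamma$, let $L \in \mathcal A_+$ be the stratum containing $\overline X$. Here the plan is the opposite: show that $g$ agrees with $f$ identically on a neighborhood of $\overline X$, reducing everything to hypothesis~(iv) and to the equality $\nabla g(\overline X) = \nabla f(\overline X)$, which comes from Proposition~\ref{P:grad1} (as $\phi$ vanishes on $\Gamma$). Outside $U_\delta$, $g = f$ by construction. Inside $U_\delta$ and near $\overline X$, hypothesis~(iii) gives $f = f\circ P_L$ on a neighborhood $V$ of $L$, while the normal flatness identity $P_L = P_L\circ P_M$ from Lemma~\ref{L:A1} applies thanks to hypothesis~(i). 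After shrinking the neighborhood so that both $X$ and $Y=P_M(X)$ lie in $V$, I chain these two facts:
\[
f\circ P_M(X) \;=\; f\circ P_L\circ P_M(X) \;=\; f\circ P_L(X) \;=\; f(X),
\]
so $g = \phi\, f\circ P_M + (1-\phi)f = f$ on this neighborhood. Hypothesis~(iv) then concludes the argument.

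The only substantive obstacle is the Case~1 identity $\mathrm D P_M(X)[X^\mp] = \mathrm D P_M(X)[Y^\mp]$, which is exactly the mechanism by which the possibly singular object $X^\mp$ is replaced, under $\mathrm D P_M(X)$, by the continuously varying $Y^\mp$. Once this is established, the rest of the proof assembles from continuity of $\nabla f$, of $\mathrm D P_M$ on $U_\delta$, and of the Moore--Penrose inverse on strata, together with the bookkeeping afforded by normal flatness.
\end{proof*}
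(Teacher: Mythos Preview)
Your proposal is correct and follows essentially the same route as the paper: the $\overline X\in M$ case is handled identically via Lemma~\ref{L:MPinv} and the observation that $\mathrm D P_M(X)$ kills $N_YM$, while for $\overline X\in\Gamma$ the paper writes out the gradient difference explicitly and shows it vanishes from $f\circ P_M = f$ near $\overline X$, whereas you deduce $g=f$ on a neighborhood directly---a cosmetic difference only.
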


\begin{proof}
First note that, by Proposition~\ref{P:grad1}, the function $g$ is again $C^1$ smooth, with $\nabla f=\nabla g$ on $\Gamma$.
Consider a matrix $\overline X\in M$, and write $Y=P_M(X)$ as usual for $X$ near $M$. Since we have $g=f\circ P_M$ near $M$, we deduce
\[
\langle \nabla g(X), X^\mp \rangle = \langle \nabla f(Y), \mathrm DP_M(X)X^\mp\rangle.
\]
Using Lemma~\ref{L:MPinv} and the observation $\mathrm DP_M(X)U=0$ for $U\in N_YM$, we obtain
\[
\mathrm DP_M(X)X^\mp = \mathrm DP_M(X)P_{T_YM}(X^\mp) = \mathrm DP_M(X)Y^\mp.
\]
From the convergence $X\to\overline X$, we deduce $Y\to \overline X$, and consequently $Y^\mp\to \overline X^\mp$, since the matrices $Y$ and $\overline X$ have the same rank. We successively conclude,
\begin{eqnarray*}
\lim_{X\to\overline X} \langle \nabla g(X), X^\mp \rangle
&=& \langle \nabla f(\overline X), \mathrm DP_M(\overline X)\overline X^\mp\rangle \\
&=& \langle \nabla(f\circ P_M)(\overline X), \overline X^\mp\rangle \\
&=& \langle\nabla g(\overline X),\overline X^\mp\rangle,
\end{eqnarray*}
thereby verifying continuity at $\overline X$. Consider now a stratum $L\subset \Gamma$ and a point $\overline X\in L$. Since we have $\nabla f=\nabla g$ outside $U_\delta$, we can assume $X\in U_\delta$. We obtain
\begin{align}
 \label{eqn:stuff}
\langle \nabla g(X), X^\mp\rangle - \langle \nabla f(X), X^\mp\rangle
&= \phi(X) \langle \nabla(f\circ P_M)(X) - \nabla f(X), X^\mp\rangle \\
&\qquad + \left(f\circ P_M(X) - f(X) \right) \langle \nabla \phi(X), X^\mp\rangle.\nonumber
\end{align}
Observe that whenever $X$ is close enough to $\overline X$, we have
\[
f\circ P_M(X)=f\circ P_L\circ P_M(X)=f\circ P_L(X)=f(X),
\]
due to the assumption on $f$ and normal flatness of $\mathcal A_+$. Consequently, the right-hand-side of (\ref{eqn:stuff}) vanishes as soon as $X$ gets sufficiently close to $\overline X$. The result follows.
\end{proof}

\begin{proof}[Proof of Theorem~\ref{T:pr11}]
We simply carry out the same iterative procedure as in the proofs of Theorems~\ref{thm:main} and~\ref{T:smooth} (see Subsection~\ref{S:pf}), while in addition applying Proposition~\ref{P:Mplus} in each step. Note that each time this is done, the hypotheses $(2)$ and $(3)$ will be satisfied since Propositions~\ref{P:grad1} and~\ref{P:Cp} were applied in some earlier iteration (initially these hypotheses are vacuously true.) The hypothesis $(1)$ is easily achieved by shrinking $U_\delta$ if necessary, and $(4)$ holds due to earlier applications of Proposition~\ref{P:Mplus}.
\end{proof}

\begin{acknowledgements*}\label{ackref}
Thanks to Adrian S. Lewis and James Renegar for encouraging the authors to undertake the current work, and to Hedy Attouch, J\'{e}r\^{o}me Bolte, and Guillaume Carlier for numerous suggestions of possible further extensions of the work presented here. The authors would also like to thank W. Zachary Rayfield for stimulating discussions. Finally, many thanks are due to an anonymous referee, whose detailed reading of the manuscript led to substantial improvements.
\end{acknowledgements*}


\bibliographystyle{amsplain}
\parsep 0pt
\bibliography{bibl}

\end{document}